\newcounter{example}
\newcommand{\C}{\mathds C}
\newcommand{\R}{\mathds R}
\newcommand{\CP}{\mathds C {\rm P}}
\newtheorem{theorem}{Theorem}
\newtheorem{lemma}{Lemma}
\newtheorem{proposition}{Proposition}
\newtheorem{remark}{Remark}
\numberwithin{equation}{section}
\def\subsubsection{\@startsection{subsubsection}{3}%
\z@{.5\linespacing\@plus.7\linespacing}{-.5em}%
{\normalfont\bfseries}}
\title{K\"ahler geometry of scalar flat metrics on Line bundles over polarized K\"ahler--Einstein manifolds}
\author{Simone Cristofori}
\address{(Simone Cristofori) Dipartimento di Scienze Matematiche, Fisiche e Informatiche \\
         Universit\`a di Parma (Italy)}
        \email{simone.cristofori@unipr.it}
\author{Michela Zedda}
\address{(Michela Zedda) Dipartimento di Scienze Matematiche, Fisiche e Informatiche \\
         Universit\`a di Parma (Italy)}
\email{michela.zedda@unipr.it}
\date{\today}
\subjclass[2020]{32H02, 53C07, 53C42}
\keywords{Scalar flat K\"ahler manifolds, K\"ahler immersions, TYCZ expansion}
\thanks{
The second named author has been supported by the project Prin 2022 – Real and Complex Manifolds: Geometry and Holomorphic Dynamics – Italy. Both the authors were supported by INdAM GNSAGA - Gruppo Nazionale per le Strutture Algebriche, Geometriche e le loro Applicazioni. 
} 
\begin{document}

\maketitle

\begin{abstract}
In view of a better understanding of the geometry of scalar flat K\"ahler metrics,
this paper studies two families of scalar flat K\"ahler metrics constructed in \cite{hwangsinger} by A. D. Hwang and M. A. Singer on $\mathds C^{n+1}$ and on $\mathcal O(-k)$. For the metrics in both the families, we prove the existence of an asymptotic expansion for their $\epsilon$-functions and we show that they can be approximated by a sequence of projectively induced K\"ahler metrics. Further, we show that the metrics on $\mathds C^{n+1}$ are not projectively induced, and that the Burns--Simanca metric is characterized among the scalar flat metrics on $\mathcal O(-k)$ to be the only projectively induced one as well as the only one whose second coefficient in the asymptotic expansion of the $\epsilon$-function vanishes.
\end{abstract}
\tableofcontents

\section{Introduction and statement of the main result}
An important open problem in K\"ahler geometry consists in characterizing projectively induced metrics in view of the properties of their curvatures. 
A K\"ahler metric $g$ on a complex manifold $M$ is said to be projectively induced if there exists a {\em local} K\"ahler immersion into the complex projective space $\mathds C{\rm P}^N$, that is if for any $p\in M$ there exists an open set $U\subset M$, $p\in U$, and a holomorphic function $f\!:U\rightarrow \mathds C{\rm P}^N$, such that $f^*g_{FS}=g$.
Here we denote by $g_{FS}$ the Fubini--Study metric, i.e. if $[Z_0:\dots:Z_N]$ are homogeneous coordinates on $\CP^N$ and $(z_1,\dots, z_N)$ are affine coordinates on $U_0=\{Z_0\neq 0\}$, $g_{FS}$ is described on $U_0$ by the K\"ahler potential $\log(1+|z_1|^2+\dots+|z_N|^2)$. Observe that we allow $N$ to be infinite, where $\CP^\infty$ is the quotient of $l^2(\C)\setminus\{0\}$ by the usual equivalent relation.

Many examples of projectively induced metrics can be constructed by taking the pull-back of the Fubini--Study metric on holomorphic submanifolds of $\CP^N$, although, it is more difficult to find projectively induced metrics with prescribed curvature. For example, D. Hulin in \cite{hulin} proved that the scalar curvature of a compact K\"ahler--Einstein manifold K\"ahler immersed into $\CP^N$, is forced to be positive. Observe that if a compact manifold admits a K\"ahler immersion in $\CP^\infty$ then it is also a K\"ahler submanifold of $\CP^N$ for some finite $N$, as the immersion is given by a basis of the space of global holomorphic sections of a suitable holomorphic line bundle, that when the manifold is compact is always finite dimensional. Although, this holds true only for {\em global} K\"ahler immersions, in fact the flat torus is an example of compact manifold that is {\em locally} projectively induced in $\CP^\infty$ but does not admit any K\"ahler immersion in $\CP^N$ for finite $N$, as follows by Calabi's rigidity Theorem in \cite[Th. 9]{calabi} (see also \cite{loizeddabook} for an overview of Calabi's work).
Recently in \cite{ALL}, C. Arezzo, C. Li and A. Loi proved that there are not Ricci--flat submanifolds of $\CP^N$ with $N<\infty$. It is still an open question if there exists a Ricci--flat (nonflat) K\"ahler submanifold of $\CP^\infty$.  
It is important to emphasize that when the ambient space is taken to be infinite dimensional the situation could be much different, for example in \cite{loizeddaMAnn} K\"ahler-Einstein submanifolds of $\CP^\infty$ with negative scalar curvature are given.
In  \cite{loisaliszuddas} A. Loi, F. Salis and F. Zuddas conjectured that the flat metric is the only example of projectively induced Ricci--flat metric and they validate the conjecture when the metric is radial and the immersion is {\em stable} (see also \cite{loizeddazuddas,loizeddazuddas2,zedda} for other results in the same context).

A very little is known for constant scalar curvature K\"ahler metrics. In the finite dimensional context, it is conjectured by A. Loi, F. Salis, F. Zuddas in \cite{loisaliszuddas3} that  the only projectively induced constant scalar curvature K\"ahler metrics lie on flag manifolds (actually their conjecture includes also extremal K\"ahler metrics). 
The Burns--Simanca metric on the blow--up of $\mathds C^2$ at one point is an example of scalar flat (nonflat) complete projectively induced K\"ahler metric, as shown by F. Cannas--Aghedu and A. Loi in \cite{agheduloi}. The Burns--Simanca metric actually satisfies a stronger assumption than to be projectively induced, namely it admits a regular quantization. 

A geometric quantization $(L,h)$ of a $n$-dimensional K\"ahler manifold $(M, \omega)$ consists of an hermitian holomorphic line bundle $L$ over $M$ such that the first Chern class of $L$ is represented by $\omega$ and its curvature ${\rm Ric}(h):=-i\partial \bar \partial \log h$ satisfies ${\rm Ric}(h)=\omega$. Let $\mathcal H$ be the space of global holomorphic sections of $L$ and denote by $\langle\cdot,\cdot\rangle_{h}$ the scalar product:
$$
\langle s,s\rangle_{h}:=\int_Mh(s(x),s(x))\frac{\omega^n}{n!}.
$$
When $\mathcal H\neq \{0\}$ (condition that is always satisfied when $M$ is compact) we can take an orthonormal basis $\{s_j\}_{j=0,\dots,d}$ of $\mathcal H$, and define a function on $M$ by:
\begin{equation}\label{epsilondef}
\epsilon_{g}(x):=\sum_{j=0}^{d}h(s_j(x),s_j(x)).
\end{equation}
In literature this $\epsilon$-function was first introduced under the name of $\eta$-{\em function} by J. Rawnsley in \cite{rawnsley}, later renamed as $\theta$-{\em function} in \cite{cgr1} followed by the {\em distortion function } of G. R. Kempf \cite{ke} and S. Ji
\cite{ji}, for the special case of Abelian varieties and of S. Zhang \cite{zha} for complex projective varieties.
The geometric quantization $(L,h)$ of $(M,\omega)$ is said to be {\em regular} if $\epsilon_{\alpha g}$ is constant for all large enough $\alpha\in \mathds Z^+$ (when $M$ is noncompact, $k$ is not necessarily an integer). Observe that in this case one considers the geometric quantizations given by $(L^\alpha,h_\alpha)$ such that ${\rm Ric}(h_\alpha)=\alpha\omega$. We also say that the metric is regular. Regular metrics enjoy the properties of being projectively induced K\"ahler metrics of constant scalar curvature.
More precisely, for large enough $\alpha$ one can construct a holomorphic map $F_\alpha\!:M\rightarrow \CP^{d_\alpha}$, ($d_\alpha\leq+\infty$), called the \emph{coherent states map}, by:
\[
F_\alpha:M\to\CP^{d_\alpha}\quad;\quad x\mapsto[s_0(x):\dots:s_{d_\alpha}(x)].
\]
which satisfies (see e.g. \cite{arezzoloi}):
\begin{equation}\label{csm}
    F_\alpha^*(\omega_{FS})=\alpha\omega+\frac{i}{2}\partial\overline\partial\log\epsilon_{\alpha g}.
\end{equation}
In particular one has that when $\epsilon_{\alpha g}$ is constant, $F_\alpha$ is a holomorphic and isometric immersion. 

Further, in view of Zelditch work \cite{zelditch}, when $M$ is compact the function $\epsilon_{\alpha g}$ admits an asymptotic expansion (the so called {\em Tian-Yau-Catlin-Zelditch expansion}):
$$
\epsilon_{\alpha g}(x)\sim\sum_{j=0}^\infty a_j(x) \alpha^{n-j},
$$
where $a_0(x)\equiv 1$ and the $a_j(x)$, $j=1,2,\dots$ are smooth functions on $M$ depending on the curvature and on its covariant derivatives at $x$ of $g$.
For this asymptotic expansion it is meant that, for every integers $l,r$ and every compact $K\subseteq M$,
\begin{equation}\label{asympt}
\left|\left|\epsilon_{\alpha g}(x)-\sum_{j=0}^l a_j(x) \alpha^{n-j} \right|\right|_{C^r}\le\frac{C(l,r,K)}{\alpha^{l+1}},
\end{equation}
for some constant $C(l,r,K)>0$.
In particular, Z. Lu \cite{lu} computed the first three coefficients, and the first two reads:
\begin{equation}
\begin{cases} \label{coeffespan} a_1=\frac12\sigma_g\\ a_2=\frac13\Delta\sigma_g+\frac1{24}\left(|R_g|^2-4|{\rm Ric}_g|^2+3\sigma_g^2\right),\end{cases}
\end{equation}
where $\sigma_g$, ${\rm Ric}_g$ and $R_g$ denote respectively the scalar curvature, the Ricci tensor and the curvature tensor of $g$, and the norms are taken with respect to $g$. When $M$ is noncompact the existence of such an expansion (known as {\em Engli\v{s} expansion}) is not guaranteed and only partial results are given (see Section \ref{sec: englisexp} for details). Further, in \cite{engliscoeff} M. Engli\v{s} computed the $a_j$'s coefficients obtaining the same results as Lu. All the $a_j$'s coefficients of regular metrics are constant. The Burns-Simanca metric shares with the flat metric the property of presenting all the coefficients $a_j$'s equal to zero \cite{agheduloi}.

Even if the metric is not regular, the existence of an asymptotic expansion for the $\epsilon$ function has important geometric consequences. In particular, it turns out that the coherent states map via \eqref{csm} allows to approximate a K\"ahler metric $g$ with projectively induced ones (see Lemma \ref{convergence} in Section \ref{sec: englisexp}).

In this paper we study families of scalar flat metrics constructed via Calabi ansatz on the total space of a hermitian line bundle over K\"ahler--Einstein manifolds by Andrew D. Hwang and Michael A. Singer in \cite{hwangsinger}. The necessary hypotheses for the existence of scalar flat metrics include the so called sigma constancy, condition that is automatically satisfied by polarized K\"ahler manifolds, which is the case we are interested in. In particular we consider the following families:
\begin{enumerate}
    \item[(A)] the $1$-parameter family of nontrivial scalar flat K\"ahler metrics $g_\beta$ on $\C^{n+1}$, $\beta<0$ (described in Section \ref{flatcase});
\item[(B)] the scalar flat metrics $g_k$ on $\mathcal O(-k)$ for integers $k>0$ (described in Section \ref{projectivecase}).
\end{enumerate}
Observe that the metrics $g_k$ in (B) reduce to the Burns--Simanca metric for $k=1$, and to the Ricci--flat Eguchi--Hanson metric for $k=2$. 

The first result of this paper is the following:
\begin{theorem}\label{main1}
 Let $g_\beta$ be the K\"ahler metric on $\C^{n+1}$ arising from Hwang--Singer construction. Then $c g_\beta$ is not projectively induced for any value of $c>0$ and $\beta<0$, but it can be approximated by a sequence of projectively induced metrics. 
 \end{theorem}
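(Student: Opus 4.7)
The plan is to treat the two assertions of Theorem~\ref{main1} separately. For the non-existence of a projective immersion, since $g_\beta$ is $U(n+1)$-invariant its K\"ahler potential has the radial form $\Phi_\beta(r)$ with $r=|z_0|^2+\cdots+|z_n|^2$. My strategy is to invoke Calabi's criterion: $cg_\beta$ is projectively induced if and only if the expansion $e^{c\Phi_\beta(r)}-1=\sum_{k\ge 1} a_k(c,\beta)\,r^k$ has $a_k(c,\beta)\ge 0$ for every $k$, because the $U(n+1)$-invariance forces the exponential to decompose into a sum of squared moduli of monomials. It therefore suffices to exhibit a single $k$ for which $a_k(c,\beta)<0$ whenever $\beta<0$ and $c>0$. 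My first concrete step would be to extract from the Hwang--Singer momentum-profile description recalled in Section~\ref{flatcase} an explicit low-order Taylor expansion of $\Phi_\beta$ at the origin, then expand the exponential and read off the sign of the first $\beta$-dependent coefficient. A useful sanity check is the case $\beta=0$, which corresponds to the flat metric and produces all $a_k\ge 0$.

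For the second assertion the argument is more formal: one combines the existence of the asymptotic expansion $\epsilon_{\alpha g_\beta}(x)\sim\sum_j a_j(x)\alpha^{n-j}$, established separately in the paper for this family, with Lemma~\ref{convergence}. Using identity~\eqref{csm}, the pull-back $\tfrac{1}{\alpha}F_\alpha^*\omega_{FS}$ equals $\omega_\beta+\tfrac{i}{2\alpha}\partial\overline\partial\log\epsilon_{\alpha g_\beta}$, and since $a_0\equiv 1$ the correction term tends to zero in $C^r$ on compacta as $\alpha\to\infty$. Hence $g_\beta$ is the $C^r$-limit on compact sets of the projectively induced metrics $\tfrac{1}{\alpha}F_\alpha^*g_{FS}$, and rescaling by a constant $c>0$ does not affect the conclusion.

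The main obstacle is contained entirely in the first part. Since the Hwang--Singer potential is defined implicitly through its momentum profile, producing a sufficiently explicit Taylor expansion of $\Phi_\beta$ and of $e^{c\Phi_\beta}$ at the origin is the technical heart of the argument. The delicate point is that the negative coefficient must arise purely from the assumption $\beta<0$ (the value $\beta=0$ yields the flat, projectively induced metric) and its sign must be controlled \emph{uniformly} in the rescaling parameter $c$; this suggests normalizing $r$ appropriately before expanding, or factoring out the leading $c$-dependence, so that the $\beta$-contribution isolates itself in a single identifiable coefficient that can then be shown to be negative for all $c>0$.
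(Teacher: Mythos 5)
Your treatment of the second assertion matches the paper's route (Lemma \ref{1inH}, Theorem \ref{esistenzaepsilon} and Lemma \ref{convergence}), but the first part of your plan rests on a false premise. The Hwang--Singer metric $g_\beta$ on $\C^{n+1}$ is \emph{not} $U(n+1)$-invariant: by \eqref{potcn} its potential is $\Phi(z,\xi)=\|z\|^2+4f\bigl(\tfrac12\log[|\xi|^2e^{-\beta\|z\|^2/2}]\bigr)$, which is invariant only under $U(n)\times U(1)$ and is not a function of $r=\|z\|^2+|\xi|^2$. Consequently $e^{c\Phi}-1$ does not collapse to a one-variable series $\sum_k a_k(c,\beta)r^k$, and Calabi's criterion does not reduce to the positivity of such coefficients; one must work with the full Hermitian matrix $(b_{jk})$ of Theorem \ref{calabipi}. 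What the paper actually does is take derivatives purely in the fibre direction at a point $p$ with base coordinate $z=0$ and fibre coordinate $\xi=s$, where the $U(1)$-invariance in $\xi$ makes $\frac{\partial^4(e^{D_p}-1)}{\partial\xi^2\partial\overline\xi^2}\big|_p$ a diagonal entry of $(b_{jk})$, and it lets $s\to0$ so that $\mu_0=f'(\tfrac12\log s^2)\to0$. The implicit definition of $f$ --- which you correctly identify as the technical heart --- is handled by passing to the momentum coordinate, where $f''=\varphi(\tau)$, $f'''=\varphi\varphi'$, etc.\ (Remark \ref{rem: derivatef}), so the relevant derivative becomes an explicit expression in $\varphi$ and its derivatives whose sign can be read off in the limit.

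There is a second, more serious obstacle to your plan of exhibiting one coefficient that is negative for \emph{all} $c>0$ and $\beta<0$: no such low-order coefficient is available. The obstruction the paper extracts (Lemma \ref{condnec} with $\lambda=0$, i.e. $2n\beta\ge-4$) only rules out $\beta<-\tfrac{2}{n}$; for $-\tfrac{2}{n}\le\beta<0$ that entry is nonnegative and gives no contradiction. The missing idea is Lemma \ref{betac} (together with Proposition \ref{cf}): an affine change of coordinates on the base shows that all the metrics $cg_\beta$, $c>0$, $\beta<0$, lie in a single holomorphic isometry class, so it suffices to rule out projective inducedness for one conveniently chosen $\beta<-\tfrac{2}{n}$. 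Without this reduction, your proposal to ``normalize $r$ and isolate the $\beta$-contribution'' would have to produce a sign uniform in both parameters at a fixed finite order, which the paper's own computation indicates does not happen.
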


Our second result characterizes the Burns--Simanca metric among the Hwang--Singer family $g_k$ on $\mathcal O(-k)$. More precisely we prove the following:
\begin{theorem}\label{main2}
 Let $g_k$ be the K\"ahler metric arising from Hwang--Singer construction on $\mathcal O(-k)$. Then $g_k$ is projectively induced if and only if its second coefficient vanishes identically, that is if and only if it is the Burns--Simanca metric on the blow-up of $\C^2$ at one point. Moreover $g_k$ can be approximated by a sequence of projectively induced metrics.
\end{theorem}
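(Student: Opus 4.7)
The statement combines three equivalent claims: (a) $g_k$ is projectively induced; (b) the second coefficient $a_2$ in the Engli\v{s} expansion of its $\epsilon$-function vanishes identically; (c) $g_k$ is the Burns--Simanca metric (the case $k=1$ of the family). My plan is to verify (b) $\Leftrightarrow$ (c) by a direct curvature computation, to obtain (c) $\Rightarrow$ (a) from the Cannas--Aghedu--Loi theorem recalled in the introduction, and to establish (a) $\Rightarrow$ (c) through a Calabi-type rigidity argument applied to the explicit K\"ahler potential. The implication (a) $\Rightarrow$ (c) is where I expect the main difficulty to lie.

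For (b) $\Leftrightarrow$ (c), I would use the Engli\v{s} expansion of $\epsilon_{g_k}$ whose existence is established in an earlier section of the paper, together with Lu's formulas \eqref{coeffespan}. Since $g_k$ is scalar flat, $a_1 = \tfrac{1}{2}\sigma_{g_k} \equiv 0$ and
$$a_2 = \tfrac{1}{24}\bigl(|R_{g_k}|^2 - 4|\mathrm{Ric}_{g_k}|^2\bigr).$$
Writing $g_k$ through its momentum profile as in the Hwang--Singer construction, both norms become explicit rational functions of the momentum coordinate depending parametrically on $k$ and on the base dimension. One then checks by direct computation that this rational function vanishes identically in the momentum variable precisely in the Burns--Simanca case, giving the equivalence.

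For (c) $\Rightarrow$ (a) I invoke the Cannas--Aghedu--Loi result: the Burns--Simanca metric admits a regular quantization, hence is projectively induced. For the harder direction (a) $\Rightarrow$ (c), the idea is to apply Calabi's criterion to the $S^1$-invariant K\"ahler potential $\Phi_k$ obtained from the Hwang--Singer construction. Projective inducibility is equivalent to $e^{\Phi_k}-1$ being the Calabi diastasis associated with an inner product on a suitable Hilbert space, which amounts to positive semidefiniteness of the matrix of coefficients in the power series expansion of $e^{\Phi_k}$ around the zero section. Because $\Phi_k$ is defined through an integral of the momentum profile, extracting and analyzing this coefficient matrix is the main technical obstacle; the expected conclusion is that the positivity conditions, together with the rigid functional shape produced by the Calabi ansatz, force $k=1$.

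The approximation statement then follows the same template used for Theorem \ref{main1}. Having the Engli\v{s} expansion at hand, formula \eqref{csm} applied to the coherent states map yields
$$\tfrac{1}{\alpha} F_\alpha^{*}(\omega_{FS}) = \omega_{g_k} + \tfrac{i}{2\alpha}\partial\overline\partial \log \epsilon_{\alpha g_k},$$
and the bound \eqref{asympt} combined with Lemma \ref{convergence} implies that this sequence converges smoothly on compact subsets to $\omega_{g_k}$, producing the desired family of projectively induced metrics approximating $g_k$.
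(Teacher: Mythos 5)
Your outline of the equivalence (b) $\Leftrightarrow$ (c), the implication (c) $\Rightarrow$ (a) via Cannas Aghedu--Loi, and the approximation statement via Lemma \ref{convergence} all match what the paper does. The problem is the direction (a) $\Rightarrow$ (c), which you correctly identify as the hard part but then leave as an expectation rather than an argument: ``the expected conclusion is that the positivity conditions \dots force $k=1$'' is not a proof, and the way you expect it to work is not how it actually plays out. The paper's execution of exactly your strategy --- computing a diagonal entry of the Calabi matrix for the diastasis $D_p$ along the fibre, here the entry $\frac{\partial^8(e^{D_p}-1)}{\partial\xi^4\partial\bar\xi^4}\big|_p$, and letting the momentum value $\mu_0\to 0$ --- produces the leading term $105-113k+48k^2-8k^3$, which is negative only for $k\ge 3$. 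At $k=2$ this quantity equals $7>0$, so the Calabi-criterion obstruction you propose does \emph{not} rule out the Eguchi--Hanson case, and the paper has to close that case by citing the separate theorem of Loi--Zedda--Zuddas that the Ricci-flat Eguchi--Hanson metric is not projectively induced. Without either that external input or a genuinely finer analysis of the Calabi matrix (off-diagonal entries or higher-order minors), your argument does not establish (a) $\Rightarrow$ (c) for $k=2$.

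Two smaller points. First, Calabi's criterion must be applied to the diastasis $D_p$ (obtained from the potential by the polarization \eqref{diastasis}), not to the raw $S^1$-invariant potential $\Phi_k$; positivity of the coefficient matrix of $e^{\Phi_k}-1$ for a non-diastatic potential is neither necessary nor sufficient. Second, for (b) $\Leftrightarrow$ (c) your reduction $a_2=\frac{1}{24}\left(|R|^2-4|\mathrm{Ric}|^2\right)$ is correct for a scalar flat metric, but the claim only becomes a proof once the resulting rational function of the momentum variable is actually computed; the paper records it as $a_2=-\frac{48(k-1)(k^2\tau-2k\tau-2)}{(k\tau+2)^6}$, whose factor $(k-1)$ makes the equivalence with $k=1$ immediate.
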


The paper is organized as follows. In Section \ref{sec: momentumconstruction} we recall what we need about Hwang-Singer construction restricted to polarized K\"ahler--Einstein manifolds. In Section \ref{sec: HS diastasis} we give an overview of Calabi's criterion, deriving a necessary condition for the Hwang-Singer metrics to be projectively induced. Sections  \ref{flatcase} and \ref{projectivecase} are devoted respectively to the description of Hwang-Singer metrics on $\C^{n+1}$ and $\mathcal{O}(-k)$. Section \ref{sec: englisexp} contains the existence results for the $\epsilon$-function associated to the Hwang-Singer metrics on $\C^{n+1}$ and $\mathcal{O}(-k)$, and for its asymptotic expansion, and the proofs of theorems \ref{main1} and \ref{main2}. Finally, the appendix includes some computations regarding the $a_2$ coefficient. \\

{\small {\em Acknowledgements:} The authors are grateful to Andrea Loi and Roberto Mossa for their valuable suggestions which have contributed to the improvement of this paper.}

\section{Momentum construction}\label{sec: momentumconstruction}

A technique to produce complete K\"ahler metrics with good curvature properties is known as \emph{Calabi ansatz}, firstly introduced by E. Calabi in \cite{Cal} and later adopted by several authors. Andrew D. Hwang and Michael A. Singer generalized this construction on the total space of an hermitian holomorphic line bundle $\pi:L\to M$ with ``$\sigma$-\emph{constant curvature}'' over a K\"ahler manifold $(M,g_M)$. In this section we summarize Hwang--Singer construction, restricting our attention to the case of polarized manifolds, where these hypothesis are automatically satisfied.

Let $\pi:(L,h)\to (M,\omega_M)$ be a polarized hermitian holomorphic line bundle with curvature form $\gamma=-i\partial\overline\partial\log h\in\Omega^2(M)$ such that $\gamma=\beta\omega_M$ over a  K\"ahler--Einstein manifold of complex dimension $n$, that is $\rho_M=\lambda\omega_M$, where $\rho_M$ is the Ricci form associated to $g_M$. 
This method, also known as \emph{momentum construction}, gives rise to \emph{bundle-adapted metrics} on $L$, that is K\"ahler metrics $g_{\varphi,\beta}$ whose K\"ahler form arises from the Calabi ansatz
\[
\omega_{\varphi,\beta}=\pi^*\omega_M+2i\partial\overline\partial f(t),
\]
where $t$ is the logarithm of the norm function defined by $h$ and $f:(-\infty,+\infty)\to[0,+\infty)$ is an increasing and strictly convex function of one real variable which makes $\omega_{\varphi,\beta}$ positive definite.

 In a coordinate chart $U\subset M$ over which $L$ is trivial, i.e. $\pi^{-1}(U)\cong U\times\C$, there exists a local coordinate system $\tilde z=(\xi,z)=(\xi,z^1,\dots,z^n)$ for $L$ where $\xi=\rho e^{i\theta}$ is a fibre coordinate and $z=(z^1,\dots,z^n)$ are pullbacks of coordinates on $M$, i.e., if $q\in M$ is a point with coordinates $z$, then every point in the fiber $\pi^{-1}(q)$ can be described by coordinates $\tilde z$. In such a chart there is a smooth positive function $h:U\subset M\to\R$ such that
\[
t:=\log||\tilde z||=\frac{1}{2}\log\left(|\xi|^2h(z)\right).
\]

As explained in \cite{hwangsinger}, to simplify the construction of scalar flat K\"ahler metrics on $L$, it is advantageous to change coordinates.  Setting
\[
\tau=f'(t)\quad,\quad \varphi(\tau)=f''(t)\;,
\]
so that $f$ satisfies the differential equation
\[
\begin{cases}
f''(t)=\varphi(\tau)\\
f'(0)=\mu_0>0
\end{cases}\;,
\]
the K\"ahler metric $\omega_{\varphi,\beta}$ reads as
\begin{equation}\label{omegaphibeta}
\omega_{\varphi,\beta}=\pi^*\omega_M-\tau\pi^*\gamma+\frac{1}{\varphi}\;d\tau\wedge d^c \tau,
\end{equation}
and along the fibre $L_x$ over $x\in M$, restricts to 
\[
\omega_{\varphi,\beta}|_{\text{fibre}}=\frac{\varphi(\tau)}{|\xi|^2}\;d\xi\wedge d\overline{\xi}\;.
\]
The explicit expression for the profile function $\varphi$ for polarized metrics is:
\begin{equation}\label{profile}
        \varphi(\tau) 
                = \frac{2}{(1-\beta\tau)^n}\bigg(\tau+\frac{\lambda\big((1-\beta\tau)^{n+1}-(1-\beta\tau)+\beta n\tau\big)}{\beta^2(n+1)}\bigg).
\end{equation}
Observe that the factor $(1-\beta\tau)^n$ arises as the determinant of the endomorphism ${\rm Id}-\tau {\rm B}$, since ${\rm B}:=\omega_M^{-1}\gamma=\beta\, {\rm Id}$ for $\gamma=\beta\omega_M$.  

\begin{remark}\label{finfty}\rm
    The function $f':(-\infty,+\infty)\to I:=(0,+\infty)$ is an increasing and surjective function (see Prop. 1.4. in \cite{hwangsinger}). In particular
    \[
    \lim_{t\to-\infty}f'(t)=0.
    \]
\end{remark}

\begin{remark}\rm \label{rem: derivatef}
    The derivatives of the function $f(t)$ are expressed recursively in the variable $\tau$ as
    \[
    f^{(n)}(t)=\varphi(\tau)(f^{(n-1)}(t))
    \]
    for $n\ge3$.
    In particular we have
    \begin{equation}\label{derivatef}
    \begin{split}
        f'''(t)&=\varphi(\tau)\varphi'(\tau),\\
         f^{(iv)}(t)&=\varphi(\tau)(\varphi(\tau)\varphi''(\tau)+(\varphi'(\tau))^2).
    \end{split}    
    \end{equation}
  
\end{remark}

\begin{proposition}\label{cf}
    Let $c>0$ be a positive real number. If $f$ is a solution for the {\rm ODE} $y''=\varphi(y')$ with $\varphi$ given by \eqref{profile},
    then $\hat{f}:=cf$ is a solution to $y''= \hat\varphi(y')$, where we denote with $\hat\varphi$ the profile function with parameters $\hat{\beta}=\frac{\beta}{c}$ and $\hat{\lambda}=\frac{\lambda}{c}$. 
\end{proposition}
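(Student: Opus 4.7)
The plan is a direct substitution argument exploiting the homogeneity of the profile function \eqref{profile} under simultaneous scaling of $\beta$ and $\lambda$. First I would compute the derivatives of $\hat f = cf$: we have $\hat f' = c f'$ and $\hat f'' = c f''$. Since $f$ satisfies $f''=\varphi(f')$, this gives
\[
\hat f''(t) = c f''(t) = c\,\varphi(f'(t)) = c\,\varphi\!\left(\tfrac{\hat f'(t)}{c}\right).
\]
Thus the proposition reduces to the purely algebraic identity
\[
c\,\varphi(\tau/c) \;=\; \hat\varphi(\tau) \qquad \text{for all admissible } \tau,
\]
where on the right-hand side $\hat\varphi$ denotes the profile function \eqref{profile} built with parameters $\hat\beta = \beta/c$ and $\hat\lambda = \lambda/c$.

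Next I would verify this identity by plugging $\tau/c$ into the explicit formula \eqref{profile}. The key observation is that $1-\beta(\tau/c) = 1 - \hat\beta\tau$, so the denominator $(1-\beta\tau/c)^n$ in $\varphi(\tau/c)$ coincides with $(1-\hat\beta\tau)^n$ in $\hat\varphi(\tau)$, and the same substitution handles every occurrence of $1-\beta\tau$ inside the parentheses. After multiplying by $c$, the leading term $\tau/c$ becomes $\tau$, and the coefficient of the second summand becomes $c\lambda/\beta^2$, while also $\beta n\tau/c = \hat\beta n\tau$. It remains only to check that the overall scalar matches, that is $c\lambda/\beta^2 = \hat\lambda/\hat\beta^2$; substituting $\hat\lambda = \lambda/c$ and $\hat\beta = \beta/c$ gives $\hat\lambda/\hat\beta^2 = (\lambda/c)/(\beta^2/c^2) = c\lambda/\beta^2$, confirming the equality.

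There is essentially no obstacle here: the statement is really a homogeneity property of the closed-form expression \eqref{profile}, and the proof amounts to recognizing that the substitution $\tau \mapsto \tau/c$ together with the rescalings $\beta \mapsto \beta/c$, $\lambda \mapsto \lambda/c$ is compatible in every factor. If one wants to make the proof coordinate-free, one can note that under the rescaling $g \mapsto c g$ of the base data the momentum coordinate $\tau = f'(t)$ is also rescaled by $c$, and the ODE $f'' = \varphi(f')$ transforms accordingly; but the direct algebraic check is the cleanest route.
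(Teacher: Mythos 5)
Your proof is correct and follows essentially the same route as the paper: both reduce the claim to the algebraic identity $c\,\varphi(\tau/c)=\hat\varphi(\tau)$ (the paper writes it as $c\,\varphi(y')=\hat\varphi((cy)')$) and verify it by direct substitution into \eqref{profile}, using $1-\beta\tau/c=1-\hat\beta\tau$ and $c\lambda/\beta^2=\hat\lambda/\hat\beta^2$. Your write-up is, if anything, slightly cleaner in isolating the scalar identity before substituting $\tau=\hat f'$.
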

\begin{proof}
    It follows by noticing that 
    \begin{equation*}
    \begin{split}
    \varphi(y')&=\frac{2}{(1-\frac{\beta}{c} (cy)')^n}\bigg(\frac{1}{c}(cy)'+\frac{1}{c^2}\frac{\lambda\big((1-\frac{\beta}{c} (cy)')^{n+1}-(1-\frac{\beta}{c} (cy)')+n\frac{\beta}{c} (cy)'\big)}{\frac{\beta^2}{c^2}(n+1)}\bigg)\\
    &=\frac{1}{c}\frac{2}{(1-\hat{\beta} (cy)')^n}\bigg(y'+\frac{\hat{\lambda}\big((1-\hat{\beta} (cy)')^{n+1}-(1-\hat{\beta} (cy)')+\hat{\beta} n (cy)'\big)}{\hat{\beta}^2(n+1)}\bigg).
    \end{split}
    \end{equation*}
Thus
\[
c\varphi(y')=\frac{2}{(1-\hat{\beta} (cy)')^n}\bigg(y'+\frac{\hat{\lambda}\big((1-\hat{\beta} (cy)')^{n+1}-(1-\hat{\beta} (cy)')+\hat{\beta} n (cy)'\big)}{\hat{\beta}^2(n+1)}\bigg).
\]
\end{proof}

In this setting \cite[Theorem B]{hwangsinger} by A. D. Hwang and M. A. Singer, reads:

\begin{theorem}
Let $\pi:(L,h)\to (M,\omega_M)$ be a polarized hermitian holomorphic line bundle over a complete K\"ahler--Einstein manifold $(M,\omega_M)$ with $\gamma<0$, such that $\gamma=\beta\omega_M$, with $\beta<0$. Then the metric $g_{\varphi,\beta}$ on the total space of $L$ is a complete scalar flat K\"ahler metric.
Moreover, the metric $g_{\varphi,\beta}$ is Ricci--flat if and only if $\rho_M=-\gamma$.
\end{theorem}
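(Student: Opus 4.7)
The strategy is to specialize the general Hwang--Singer momentum construction to the polarized K\"ahler--Einstein setting, where the transversal data $\mathrm{B}=\omega_M^{-1}\gamma$ collapses to the scalar $\beta\,\mathrm{Id}$ and the base scalar curvature is constant, so that every quantity entering the curvature identities becomes an explicit rational function of the momentum variable $\tau$.

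First, I would derive the scalar curvature of $g_{\varphi,\beta}$ from \eqref{omegaphibeta}. Because $\mathrm{B}=\beta\,\mathrm{Id}$, the factor $Q(\tau):=\det(\mathrm{Id}-\tau\mathrm{B})=(1-\beta\tau)^n$ is an explicit polynomial, and the general bundle-adapted curvature formula collapses in the Einstein case $\rho_M=\lambda\omega_M$ to a scalar identity of the schematic form
\[
\sigma_{g_{\varphi,\beta}}(\tau)=\frac{1}{Q(\tau)}\Bigl(A(\lambda,\beta;\tau)-\bigl(Q(\tau)\varphi(\tau)\bigr)''\Bigr),
\]
where $A(\lambda,\beta;\tau)$ is a known polynomial in $\tau$ with coefficients depending on $\lambda$, $\beta$ and $n$. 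The scalar flat equation is therefore a second order linear ODE in $Q\varphi$. Integrating it twice under the boundary conditions $\varphi(0)=0$ and $\varphi'(0)=1$ that guarantee smoothness of the metric across the zero section $M\subset L$, one recovers precisely \eqref{profile}.

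Second, I would verify positivity and completeness. Expanding \eqref{profile} at $\tau=0$ gives $\varphi(\tau)=\tau+O(\tau^2)$, while as $\tau\to+\infty$ the leading behaviour is controlled by $\tau/(1-\beta\tau)^n$, which is positive because $\beta<0$; a more careful inspection of the intermediate terms shows $\varphi>0$ on $(0,+\infty)$, so the Calabi ansatz produces a bona fide K\"ahler metric. Completeness in the fibre direction follows from Remark \ref{finfty} together with the ODE $f''=\varphi(f')$, while horizontal completeness is a consequence of the inequality $\pi^*\omega_M-\tau\pi^*\gamma=(1-\beta\tau)\pi^*\omega_M\geq\pi^*\omega_M$ valid for $\beta<0$ and $\tau\geq0$, combined with completeness of $(M,\omega_M)$: any divergent horizontal curve in $L$ projects to a divergent curve in $M$.

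Finally, for the Ricci--flat characterization I would use the standard expression
\[
\rho_{g_{\varphi,\beta}}=\pi^*\rho_M-\tfrac12\,i\partial\overline\partial\log\bigl(Q(\tau)\varphi(\tau)\bigr).
\]
Using \eqref{profile} one checks that $Q(\tau)\varphi(\tau)$ is an explicit polynomial in $\tau$ whose $i\partial\overline\partial\log$ contributes a term proportional to $\pi^*\gamma$ after simplification. Imposing $\rho_{g_{\varphi,\beta}}\equiv 0$ then reduces, by comparing the coefficients of $\pi^*\omega_M$, to the algebraic identity $\lambda=-\beta$, which is precisely $\rho_M=-\gamma$. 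The main obstacle is Step 1: the derivation of the scalar curvature formula for a bundle-adapted metric and the identification of the primitive leading to \eqref{profile} require sustained but essentially algebraic bookkeeping; the positivity/completeness claims and the Ricci--flat reduction are then mechanical once the closed form of $\varphi$ is in hand.
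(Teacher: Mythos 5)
The first thing to say is that the paper contains no proof of this statement: it is quoted as a specialization of Theorem~B of \cite{hwangsinger}, so the only ``proof'' on offer is the citation, and your sketch is in effect an outline of Hwang--Singer's own argument (scalar flatness as the linear ODE $(Q\varphi)''=A(\tau)$, boundary conditions at $\tau=0$ for smooth extension over the zero section, growth of $\varphi$ at the ends of the momentum interval for completeness, and the formula $\rho_{\varphi}=\pi^*\rho_M-i\partial\overline\partial\log(Q\varphi)$ for the Ricci characterization). The route is the right one, but as written it has three concrete problems. First, in this paper's normalization ($\omega_{\varphi,\beta}=\pi^*\omega_M+2i\partial\overline\partial f(t)$ with $t=\tfrac12\log(|\xi|^2h)$) the smoothness condition at the zero section is $\varphi(0)=0$, $\varphi'(0)=2$, not $\varphi'(0)=1$: expanding \eqref{profile} gives $\varphi(\tau)=2\tau+n(\lambda+2\beta)\tau^2+O(\tau^3)$, consistent with the limit of $\varphi''(\mu_0)$ computed in Lemma~\ref{condnec}. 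With $\varphi'(0)=1$ you recover only half of \eqref{profile}, and the fibre metric $\varphi(\tau)|\xi|^{-2}d\xi\wedge d\overline\xi$ then behaves like $|\xi|^{-1}$ near $\xi=0$, so it does not extend. The same normalization slip produces the spurious factor $\tfrac12$ in your Ricci formula; the correct identity, equivalent to \eqref{ricciform}, has no such factor.

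Second, and more seriously, the positivity step fails in the generality in which you (and the quoted statement) pose it. Writing $Q(\tau)=(1-\beta\tau)^n$, formula \eqref{profile} gives $Q\varphi=2\tau+\tfrac{2\lambda}{\beta^2(n+1)}\bigl((1-\beta\tau)^{n+1}-(n+1)(1-\beta\tau)+n\bigr)$; the bracket is positive for $\tau>0$, so $\varphi>0$ is immediate when $\lambda\ge0$, but for $\lambda<0$ one has $Q\varphi\sim\tfrac{2\lambda|\beta|^{n-1}}{n+1}\tau^{n+1}\to-\infty$, so $\varphi$ becomes negative and the ansatz does not yield a metric on all of $L$. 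Complete K\"ahler--Einstein manifolds with $\lambda<0$ polarized by a negative line bundle certainly exist, so ``a more careful inspection of the intermediate terms'' cannot close this step without importing the hypothesis $\lambda\ge0$ --- which the paper does use implicitly elsewhere (e.g.\ ``Being $\lambda\ge0$\dots'' in the proof of Theorem~\ref{esistenzaepsilon}) and which holds in both applications ($\lambda=0$ and $\lambda=1$). Third, completeness at the $\tau\to+\infty$ end does not follow from Remark~\ref{finfty}; the relevant criterion (also from \cite{hwangsinger}) is $\int^{\infty}\varphi(\tau)^{-1/2}\,d\tau=\infty$, i.e.\ at most quadratic growth of $\varphi$, which does hold here since $\varphi$ grows at most linearly when $\lambda\ge0$. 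Your horizontal-completeness argument via $(1-\beta\tau)\pi^*\omega_M\ge\pi^*\omega_M$ and the Ricci-flat reduction to $\lambda=-\beta$ are fine.
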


\begin{remark}\rm   \label{prodmetrics}
For $\gamma=0$ we have local product metrics since they are bundle-adapted metrics on flat-bundles, see (\cite{hwangsinger}, Remark 1.6).
\end{remark}

\section{Calabi's Criterion applied to $g_{\varphi,\beta}$}\label{calabicriterion}\label{sec: HS diastasis}
In this section we recall what we need on Calabi's criterion for projectively induced metrics and compute the diastasis function for our metrics.

Let $(\CP^N,g_{\text{FS}})$ be the complex projective space of dimension $N\le\infty$ endowed with the Fubini-Study metric. Let $[Z_0:\dots:Z_N]$ be homogeneous coordinates and $(z_1,\dots,z_N)$ the respective affine coordinates on the coordinate charts $U_j=\{Z_j\neq0\}$ defined by $z_k:=\frac{Z_k}{Z_j}$. A K\"ahler potential for $g_{\text{FS}}$ on $U_0$ is 
\[
\phi_{\text{FS}}(z)=\log\biggl(1+\sum_{j=1}^N|z_j|^2\biggl).
\]

In \cite{calabi} E. Calabi gives a criterion for a real analytic K\"ahler manifold $(M,g)$ to admit a holomorphic and isometric (from now on {\em K\"ahler}) immersion into a complex space form in terms of the {\em diastasis function} associated to the metric $g$. Here we consider only the case when the ambient space is the complex projective space $\CP^N$, that is when the metric is projectively induced, which is the one we deal with. Observe that it is not restrictive to assume the manifold to be real analytic, since a metric induced by the pull-back through a holomorphic map of the real analytic Fubini-Study metric, is forced itself to be real analytic. The diastasis function can be viewed as a particular K\"ahler potential defined as follows. Fix a coordinates system $(z_1,\dots,z_n)$ on a chart $U\subset M$ and let $\phi:U\to\R$ be a K\"ahler potential for $g$ on $U$.
 By duplicating the variables $z$ and $\overline z$, the K\"ahler potential $\phi$ on $U$ can be complex analytically extended to a function $\tilde\phi:W\to\R$ on a neighborhood $W$ of the diagonal in $U\times\overline U$. The diastasis function is defined by
\begin{equation}\label{diastasis}
D(z,w):=\tilde\phi(z,\overline z)+\tilde\phi(w,\overline w)-\tilde\phi(z,\overline w)-\tilde\phi(w,\overline z).
\end{equation}
Denote by $p\in U$ the point of coordinates $w_0$. Observe that fixing $w=w_0$, the diastasis $D_p(z):=D(z,w_0)$ is a K\"ahler potential for $g$ on $U$.

Calabi's criterion is the following:

\begin{theorem}[Calabi's criterion \cite{calabi}]\label{calabipi}
Let $(M,g)$ be a K\"ahler manifold. An open neighborhood of a point $p\in M$ admits a K\"ahler immersion into $\CP^N$ if and only if the $\infty\times\infty$ hermitian matrix of coefficients $(b_{jk})$ defined by:
\begin{equation}\label{expansioneD}
e^{D_p(z)}-1=\sum_{j,k=0}^\infty b_{jk}(z-p)^{m_j}(\overline z-\overline p)^{m_k}
\end{equation}
is positive semidefinite of rank at most $N$. 
\end{theorem}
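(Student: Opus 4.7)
The plan is to prove Calabi's criterion via the hereditary property of the diastasis function under K\"ahler immersions together with a Gram-type factorization of positive semidefinite hermitian matrices. First I would establish a preliminary lemma: for any K\"ahler immersion $f:(M,g)\to(N,h)$ one has $D_p^g(z) = D_{f(p)}^h(f(z))$. The key reason is that any two K\"ahler potentials for the same metric differ by the real part of a holomorphic function, and such pluriharmonic terms cancel by the symmetrization in \eqref{diastasis}; thus the diastasis is canonically attached to the metric and compatible with holomorphic pullbacks. A direct computation from $\phi_{FS}(z) = \log(1 + \sum|z_j|^2)$ then gives $D_0^{FS}(z) = \log(1 + \sum|z_j|^2)$ at the origin of $U_0$, so
$$e^{D_0^{FS}(z)} - 1 = \sum_{j=1}^N |z_j|^2.$$

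For the forward direction, I would assume a K\"ahler immersion $f: U \to \CP^N$ exists, compose with an ambient isometry of $\CP^N$ so that $f(p)$ is the origin of $U_0$, and write $f = (f_1,\ldots,f_N)$ in affine coordinates. The lemma then yields $e^{D_p(z)} - 1 = \sum_{k=1}^N |f_k(z)|^2$. Expanding each $f_k(z) = \sum_j c_{jk}(z-p)^{m_j}$ as a power series and matching coefficients against \eqref{expansioneD} exhibits $b_{jk} = \sum_{\ell=1}^N c_{j\ell}\overline{c_{k\ell}}$ as the Gram matrix of $N$ vectors, hence positive semidefinite of rank at most $N$.

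For the converse I would start from the positive semidefinite matrix $(b_{jk})$ of rank at most $N$, factor it as $(b_{jk}) = C C^*$ with $C$ having at most $N$ columns $(c_{j\ell})_j$ via spectral decomposition, and define $F_\ell(z) := \sum_j c_{j\ell}(z-p)^{m_j}$. By construction
$$\sum_{\ell=1}^N |F_\ell(z)|^2 = \sum_{j,k} b_{jk}(z-p)^{m_j}(\overline z-\overline p)^{m_k} = e^{D_p(z)} - 1,$$
so the map $F := [1: F_1: \dots: F_N]: U \to \CP^N$ satisfies $F^*\phi_{FS} = \log(1 + \sum|F_\ell|^2) = D_p(z)$, a K\"ahler potential for $g$; hence $F^*g_{FS} = g$ and $F$ is the desired K\"ahler immersion.

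The hard part will be the convergence analysis in the $N=\infty$ setting: I must check that the spectral factorization of the infinite positive semidefinite matrix can be arranged so that each formal series $F_\ell(z)$ converges on a common open neighborhood of $p$, and that $(F_\ell(z))_\ell$ defines a genuine element of $\ell^2(\C)\setminus\{0\}$ for $z$ near $p$, so that $F$ lands in the model of $\CP^\infty$ recalled in the Introduction. The control comes from the convergence of the double series for $e^{D_p(z)}-1$ and Cauchy--Schwarz applied columnwise to $C$; for finite $N$ the factorization is elementary and convergence of each $F_\ell$ is immediate, so the infinite-dimensional bookkeeping is the only substantive issue.
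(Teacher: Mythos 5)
The paper offers no proof of this statement: it is quoted from Calabi's 1953 paper and used as a black box, so there is no internal argument to compare against. Your proposal reconstructs Calabi's original proof, and it is correct in outline: the hereditary property of the diastasis under K\"ahler immersions, the computation $e^{D_0^{FS}(z)}-1=\sum_j|z_j|^2$, the identification of $(b_{jk})$ as a Gram matrix in the forward direction, and the factorization $(b_{jk})=CC^*$ producing the coherent-states-type map in the converse are exactly the right ingredients, and the observation that $F^*g_{FS}=g$ with $g$ positive definite forces $F$ to be an immersion closes the converse.

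Two points deserve more care than your sketch gives them. First, to define $D_p$ in the ``only if'' direction you need $g$ to be real analytic, while the theorem is stated for an arbitrary K\"ahler manifold; this is harmless because $g=f^*g_{FS}$ with $f$ holomorphic and $g_{FS}$ real analytic forces real analyticity (the paper makes this remark just before the statement), but your proof should say so. Second, for $N=\infty$ the phrase ``spectral decomposition'' is not quite right: the infinite matrix $(b_{jk})$ need not define a bounded operator on $\ell^2$, so one cannot diagonalize it directly. The standard fix is to realize $(b_{jk})$ abstractly as the Gram matrix $\langle v_j,v_k\rangle$ of vectors spanning a Hilbert space of dimension at most $N$ (this is always possible for a positive semidefinite matrix) and to expand each $v_j$ in an orthonormal basis, which yields columns $c_{j\ell}$ with $\sum_\ell|c_{j\ell}|^2=b_{jj}$. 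Combined with the convergence of the diagonal series $\sum_j b_{jj}\,|(z-p)^{m_j}|^2$ on a polydisc (which follows from the real analyticity of $e^{D_p}-1$ and positive semidefiniteness, since $|b_{jk}|\le\sqrt{b_{jj}b_{kk}}$) and the columnwise Cauchy--Schwarz estimate you mention, this gives uniform convergence of each $F_\ell$ and of $\sum_\ell|F_\ell|^2$ on a smaller polydisc, so that $F$ genuinely lands in the $l^2$-model of $\CP^\infty$. With these two clarifications your argument is complete and coincides with the proof in \cite{calabi}.
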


Let now $(M,\omega_M)$ be a K\"ahler-Einstein manifold with Einstein constant $\lambda$, that is $\rho_M=\lambda\omega_M$. As described in Section \ref{sec: momentumconstruction}, the momentum construction gives a 1-parameter family of scalar flat K\"ahler metrics $\omega_{\varphi,\beta}$ on the polarized line bundle $(L,h)$ described by the K\"ahler potential:
\begin{equation}\label{kpotential}
\Psi(z,\xi)=\Phi(z)+4f\left(\frac{1}{2}\log[|\xi|^2 h(z)]\right),
\end{equation}
where we can take as $\Phi$ the diastasis function for $\omega_M$, centred at $z=0$.
We now describe the diastasis function for the metrics $g_{\varphi,\beta}$ and give a necessary condition for these metrics to be projectively induced, which follows directly by applying the Calabi's criterion to them. 

 By \eqref{diastasis}, the diastasis function associated to $\omega_{\varphi,\beta}$, centred at $p=(s,0)$ with $s\in\R^+$ is:
\begin{equation}\label{dias}
D(z,\xi)|_p=\Phi(z)+4f\left(\frac{1}{2}\log\left(|\xi|^2 h(z)\right)\right)+4f\left(\frac{1}{2}\log s^2\right)-4f\left(\frac{1}{2}\log(\xi s)\right)-4f\left(\frac{1}{2}\log(\overline\xi s)\right),
\end{equation}
where we set $h(0)=1$.

In particular, for the fibre metric we have:
\begin{equation}\label{diastfibre}
D_p(\xi)|_{\text{fibre}}=4f\left(\frac{1}{2}\log\left(|\xi|^2 \right)\right)+4f\left(\frac{1}{2}\log s^2\right)-4f\left(\frac{1}{2}\log(\xi s)\right)-4f\left(\frac{1}{2}\log(\overline\xi s)\right).
\end{equation}

\begin{proposition}\label{prop: comega}
    Let $c>0$ be a positive real number. Then the metric $c\,\omega_{\varphi,\beta}=\omega_{\hat{\varphi},\hat\beta}$, where $\hat{\varphi}$ is the profile function defined by \eqref{profile} with parameters $\hat{\beta}:=\beta/c$ and $\hat{\lambda}:=\lambda/c$.
\end{proposition}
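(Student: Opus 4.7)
The plan is to combine the explicit formula \eqref{omegaphibeta} with the previous proposition on the rescaled function $\hat f = cf$. More precisely, I would start from
\[
\omega_{\varphi,\beta}=\pi^*\omega_M-\tau\pi^*\gamma+\frac{1}{\varphi}\,d\tau\wedge d^c \tau,
\]
multiply both sides by $c$, and show term by term that the result is exactly the Hwang--Singer K\"ahler form built from the rescaled parameters.

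For the first term, write $c\,\pi^*\omega_M=\pi^*\hat\omega_M$ with $\hat\omega_M:=c\,\omega_M$. Since the Ricci form is invariant under constant rescaling of the metric, $\rho_M=\lambda\omega_M=(\lambda/c)\hat\omega_M$, so $\hat\omega_M$ is K\"ahler--Einstein with constant $\hat\lambda=\lambda/c$. The curvature form $\gamma$ of $(L,h)$ is intrinsic to the hermitian bundle and is unaffected by the scaling, so $\gamma=\beta\omega_M=(\beta/c)\hat\omega_M=\hat\beta\hat\omega_M$. This already identifies the base K\"ahler--Einstein data of the two sides, and in particular matches the coefficients appearing in \eqref{profile}.

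For the remaining two terms I would introduce the new momentum variable $\hat\tau:=\hat f'(t)=c f'(t)=c\tau$. Then $-\tau\pi^*\gamma$ becomes $-c\tau\pi^*\gamma=-\hat\tau\pi^*\gamma$, which is the correct middle term of $\omega_{\hat\varphi,\hat\beta}$. For the last term, the chain rule gives $d\hat\tau\wedge d^c\hat\tau=c^2\,d\tau\wedge d^c\tau$, so
\[
c\cdot\frac{1}{\varphi(\tau)}\,d\tau\wedge d^c\tau=\frac{1}{c\,\varphi(\tau)}\,d\hat\tau\wedge d^c\hat\tau.
\]
By Proposition~\ref{cf}, $c\,\varphi(\tau)=\hat\varphi(\hat\tau)$ (this is precisely the statement obtained there from the ODE $(cf)''=\hat\varphi((cf)')$), so this term equals $\frac{1}{\hat\varphi(\hat\tau)}\,d\hat\tau\wedge d^c\hat\tau$, which is the third term of $\omega_{\hat\varphi,\hat\beta}$. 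Adding the three rewritten pieces yields $c\,\omega_{\varphi,\beta}=\omega_{\hat\varphi,\hat\beta}$.

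There is no real obstacle here: the content is essentially Proposition~\ref{cf} repackaged in terms of K\"ahler forms rather than profile functions. The only point to be careful about is that $\hat\tau$ and $\hat\omega_M$ really are the momentum variable and base metric that enter the Hwang--Singer recipe with parameters $(\hat\beta,\hat\lambda)$, which is why I would verify at the outset that the Einstein constant and the bundle--curvature coefficient transform as $\lambda\mapsto\lambda/c$ and $\beta\mapsto\beta/c$.
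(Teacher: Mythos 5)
Your proof is correct and rests on the same two ingredients as the paper's: Proposition~\ref{cf} and the observation that $c\,\omega_M$ is K\"ahler--Einstein with constant $\lambda/c$ while $\gamma=\hat\beta(c\,\omega_M)$. The paper gets there in one line by writing $c\,\omega_{\varphi,\beta}=c\,\pi^*\omega_M+2i\partial\overline\partial\,(cf(t))$ and invoking Proposition~\ref{cf} for $\hat f=cf$, whereas you verify the same identity term by term in the momentum coordinates of \eqref{omegaphibeta} via $\hat\tau=c\tau$; this is only a more explicit presentation of the same argument.
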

\begin{proof}
Observe that 
\[
c \omega_{\varphi,\beta}=c\omega_M+2i\partial\overline\partial cf(t),
\]
and $c\omega_M$ is a K\"ahler--Einstein metric with Einstein constant $\frac{\lambda}{c}$. Conclusion follows since by Proposition \ref{cf} $\hat f=cf$ is a solution to $y''=\hat\varphi(y')$.
\end{proof}
\begin{remark}\rm
We note that if $g$ is a scalar flat projectively induced K\"ahler metric, then its (scalar flat) multiples $cg$ may not be so. If the base manifold is K\"ahler--Einstein with Einstein constant $\lambda$, we find a close connection between the parameters $c$ and $\lambda$ (as in the previous proposition). Namely, it turns out that varying the parameter $c$ over the positive real line, corresponds to construct the Hwang--Singer metrics on the same line bundle over a rescaled K\"ahler-Einstein manifold with Einstein constant $\frac{\lambda}{c}$. Thus it is equivalent to study the metric $c\omega_\varphi$ as $c$ varies and the metric $\omega_{\varphi}$ as $\lambda$ varies in the base manifold. 
\end{remark}
\begin{lemma}\label{condnec}
In the notation above, a necessary condition for the metric $\omega_{\varphi,\beta}$ to be projectively induced is that:
\begin{equation}\label{dercn}
n\left(\lambda+2\beta\right)\geq -4.
\end{equation}

\end{lemma}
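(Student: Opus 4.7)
The plan is to exploit the curvature-decreasing property of local K\"ahler immersions into complex projective space. Concretely, if $\omega_{\varphi,\beta}$ is (locally) projectively induced, then so is the restriction of $g_{\varphi,\beta}$ to any complex submanifold of $L$; applying this to a fibre $L_x\cong\C$ (a one-complex-dimensional K\"ahler submanifold) and invoking the Calabi curvature-decreasing inequality \cite{calabi}, the holomorphic sectional curvature of $L_x$ at every point is bounded above by the holomorphic sectional curvature of the ambient Fubini--Study metric, which is constant and equal to $4$ for the normalization in use (K\"ahler potential $\log(1+|z_1|^2+\cdots+|z_N|^2)$). Since the holomorphic sectional curvature of a one-complex-dimensional K\"ahler manifold coincides with its Gaussian curvature, this reduces to the statement that
\[
K_{\mathrm{fib}}(\tau)\;\le\;4\qquad\text{for every }\tau\in I=(0,+\infty).
\]

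First I would compute $K_{\mathrm{fib}}$ explicitly. From \eqref{omegaphibeta} the restriction of $\omega_{\varphi,\beta}$ to a fibre is $\tfrac{\varphi(\tau)}{|\xi|^2}\,d\xi\wedge d\overline{\xi}$, so the fibre carries the conformal metric with potential $u$ determined by $e^{2u}=\varphi(\tau)/|\xi|^2$. Using $\partial_{\overline{\xi}}\tau=\varphi(\tau)/(2\overline{\xi})$ and the formula $K=-e^{-2u}\Delta_{0}u$ for the Gaussian curvature of a conformal metric, a short computation yields
\[
K_{\mathrm{fib}}(\tau)\;=\;-\tfrac{\varphi''(\tau)}{2},
\]
so the curvature-decreasing inequality becomes $\varphi''(\tau)\ge -8$ for every $\tau>0$.

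The last step is to evaluate $\varphi''(0)$ from the explicit profile \eqref{profile}. Writing $\varphi(\tau)=2P(\tau)(1-\beta\tau)^{-n}$ with
\[
P(\tau)=\tau+\frac{\lambda\big[(1-\beta\tau)^{n+1}-(1-\beta\tau)+n\beta\tau\big]}{\beta^2(n+1)},
\]
direct differentiation gives $P(0)=0$, $P'(0)=1$ (using the cancellation $-(n+1)\beta+\beta+n\beta=0$, so the $\lambda$-dependence in $P'$ disappears at $\tau=0$), and $P''(0)=n\lambda$; consequently
\[
\varphi''(0)\;=\;2P''(0)+4n\beta P'(0)\;=\;2n(\lambda+2\beta).
\]
Since $\varphi$ and its derivatives extend continuously to $\tau=0$ by the rational form of \eqref{profile}, letting $\tau\to 0^{+}$ in $\varphi''(\tau)\ge-8$ produces $2n(\lambda+2\beta)\ge-8$, i.e.\ the desired $n(\lambda+2\beta)\ge-4$. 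The main conceptual input is Calabi's curvature-decreasing inequality applied to the fibres (isolating the 1-dimensional case is what makes holomorphic sectional curvature equal Gaussian curvature, avoiding any higher-dimensional curvature bookkeeping); the only genuine computation is the identity $\varphi''(0)=2n(\lambda+2\beta)$, whose clean form I expect to be the main obstacle --- it depends crucially on the two cancellations $P(0)=0$ and $P'(0)=1$ which conspire to remove all $\lambda$-dependence from the $P$-factor of $\varphi''(0)$ except through $P''(0)$.
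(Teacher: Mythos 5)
Your argument is correct, and it reaches the conclusion by a genuinely different route from the paper. The paper applies Calabi's criterion directly: it requires the diagonal entry of the matrix $(b_{jk})$ corresponding to $\xi^2\bar\xi^2$, namely $\frac{\partial^4(e^{D_p}-1)}{\partial\xi^2\partial\bar\xi^2}\big|_p\ge 0$, rewrites this in terms of $\varphi,\varphi',\varphi''$ at $\mu_0=f'\bigl(\tfrac{\log s^2}{2}\bigr)$, obtaining
\[
\varphi''(\mu_0)\;\geq\;-8-\frac{(2-\varphi'(\mu_0))^2}{\varphi(\mu_0)},
\]
and then lets $\mu_0\to 0$. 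You instead invoke Calabi's curvature-decreasing property (the Gauss equation for K\"ahler submanifolds, valid also for $N=\infty$) on the fibres, whose induced metric $\tfrac{\varphi(\tau)}{|\xi|^2}\,|d\xi|^2$ has Gaussian curvature $-\tfrac12\varphi''(\tau)$; this yields the pointwise inequality $\varphi''(\tau)\ge -8$ on all of $[0,+\infty)$, which at $\tau=0$ gives $\varphi''(0)=2n(\lambda+2\beta)\ge-8$, exactly the paper's limit. All your computations check out: $P(0)=0$, $P'(0)=1$, $P''(0)=n\lambda$ agree with the paper's explicit formula for $\varphi''$, and the normalization constant $4$ for the holomorphic sectional curvature of $g_{FS}$ is consistent with the convention $e^{2u}=g_{\xi\bar\xi}$ you use on the fibre (a useful cross-check: the Fubini--Study profile $\varphi(\tau)=2\tau(1-2\tau)$ has $\varphi''\equiv-8$, saturating your bound, and your inequality reproduces the paper's condition $n\beta\ge-2$ when $\lambda=0$). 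Two remarks on what each approach buys. Your intermediate inequality $\varphi''(\tau)\ge-8$ for \emph{every} $\tau$ is strictly stronger than the paper's \eqref{dercn0}, since the latter allows $\varphi''$ to dip below $-8$ away from $\tau=0$; so the curvature argument is both more conceptual and, as a necessary condition, sharper at this order. On the other hand, the paper's method scales: by taking higher diagonal entries $\frac{\partial^{2j}(e^{D_p}-1)}{\partial\xi^j\partial\bar\xi^j}\big|_p$ one extracts finer obstructions (Remark \ref{sharper nc}), which is what is actually needed later in Lemma \ref{neccondcp1} to rule out $k\ge 3$; the curvature-decreasing inequality has no comparable refinement. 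The only point worth making explicit in a write-up is the metric/form normalization fixing the ambient holomorphic sectional curvature at $4$, since the final constant in \eqref{dercn} depends on it.
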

\begin{proof}
By Calabi's Criterion Theorem \ref{calabipi}, since $\frac{\partial^4 (e^{D_p} -1)}{\partial\xi^2\overline\partial\xi^2}|_p$ is an element on the diagonal of the matrix $(b_{jk})$ in \eqref{expansioneD}, a necessary condition for the metric $\omega_{\varphi,\beta}$ to be projectively induced is that:
\begin{equation}\label{D2}
\frac{\partial^4 (e^{D_p} -1)}{\partial\xi^2\overline\partial\xi^2}|_p=\frac{1}{s^4}\left(\frac{1}{4} f^{(4)}\left(\frac{\log s^2}{2}\right)-f^{(3)}\left(\frac{\log s^2}{2}\right)+2 f''\left(\frac{\log s^2}{2}\right)^2+f''\left(\frac{\log s^2}{2}\right)\right)\geq 0.
\end{equation}
By \eqref{derivatef} and since $\varphi(\tau)>0$ for every $\tau\in\R^+$, \eqref{D2} is equivalent to:
\begin{equation}
4+8\varphi(\mu_0)-4\varphi'(\mu_0)+\varphi'(\mu_0)^2+\varphi(\mu_0)\varphi''(\mu_0)\geq 0,\nonumber
\end{equation}
where $\mu_0:=f'\left(\frac{\log s^2}{2}\right)$, i.e.:
\begin{equation}
(2-\varphi'(\mu_0))^2 +\varphi(\mu_0)\left(8+\varphi''(\mu_0)\right)\geq 0,\nonumber
\end{equation}
that is:
\begin{equation}\label{dercn0}
 \varphi''(\mu_0)\geq -8-\frac{(2-\varphi'(\mu_0))^2}{\varphi(\mu_0)}.
\end{equation}
By the definition of $\varphi$ \eqref{profile},
$$
\varphi'(\mu_0)=\frac{2\left((n+1)\beta+\lambda\left(1-(1-\beta\mu_0)^n\right)+\left((\beta^2+1)(n^2-1)+\lambda((1-\beta\mu_0)^n)\right)\mu_0\right)}{(n+1)\beta(1-\beta\mu_0)^{n+1}},
$$

$$
\varphi''(\mu_0)=\frac{2n}{(1-\beta\mu_0)^{n+2}}\left(\lambda+2\beta+(n-1)\beta(\beta+\lambda)\mu_0\right).
$$
Since we can choose $s>0$ arbitrarily small, then \eqref{dercn0} must hold for $\mu_0\to0$ (see Remark \ref{finfty}). It is not hard to see that as $\mu_0\rightarrow 0$,
$$
\frac{(2-\varphi'(\mu_0))^2}{\varphi(\mu_0)}\rightarrow 0,
$$
and
$$
\varphi''(\mu_0)\rightarrow2n\left(\lambda+2\beta\right).
$$
Thus \eqref{dercn0} implies 
$$
n\left(\lambda+2\beta\right)\geq -4,
$$
as wished.
\end{proof}

\begin{remark}\rm  \label{sharper nc}
    Considering the $j$-th derivatives $\frac{\partial^{2j} (e^{D_p} -1)}{\partial\xi^j\overline\partial\xi^j}|_p$, we get sharper necessary conditions for the metric $g_{\varphi,\beta}$ to be projectively induced. Although, such conditions will always depend on the choice of $\beta$ and $\lambda$. 
\end{remark}
\begin{remark}\rm  \label{ricciflatcase}
    When $\beta=-\lambda$, the metric $g_{\varphi,\beta}$ is Ricci-flat. In this case condition \eqref{dercn} gives that $g_{\varphi,\beta}$ is not projectively induced for any $\lambda> \frac4n$. This estimate can be improved to $\lambda\geq 1$ also for $n=2$, $3$, and $4$, by computing the $4$-th derivative $\frac{\partial^{8} (e^{D_p} -1)}{\partial\xi^4\overline\partial\xi^4}|_p$, evaluated at $\mu_0=\frac1{100\lambda}$. Further, observe that when $n=1$, $g_{\varphi,\beta}$ is the Eguchi--Hanson metric on $\mathds C{\rm P}^1$, which has been proven to be not projectively induced in \cite{loizeddazuddas2}. As before, observe that such condition can be improved considering higher derivatives but will always depend on the choice of $\lambda$, as in the above remark. 
\end{remark}

\section{Hwang--Singer metrics on $\C^{n+1}$ }\label{flatcase}
Let $(M,\omega_M)=(\mathds C^{n},\omega_0)$, where $\omega_0$ is the canonical flat metric, i.e. $\omega_0=\frac i2\partial\bar\partial||z||^2$. The momentum construction in this case gives a $1$-parameter family of scalar flat K\"ahler metrics $\omega_{\varphi,\beta}$ on $\mathds C^{n+1}$ described by the K\"ahler potential (see \ref{kpotential}):
\begin{equation}\label{potcn}
\Phi(z,\xi):=||z||^2+4f\left(\frac12\log\left[|\xi|^2e^{-\frac{\beta}2||z||^2}\right]\right),
\end{equation}
obtained setting $h(z)=e^{-\frac{\beta}{2}||z||^2}$ in \eqref{kpotential}, for $\beta<0$, so that $\gamma=-i\partial\bar \partial \log h(z)=\beta \frac{i}{2}\partial\bar \partial||z||^2=\beta\omega_M$
and by \eqref{dias} the diastasis function for $\omega_{\varphi,\beta}$ centred at $(z, \xi)=(s,0)$ reads:
\begin{equation}\label{diastcn}
D_{(s,0)}(z,\xi)=||z||^2+4f\left(\frac12\log\left[|\xi|^2e^{-\frac{\beta}2||z||^2}\right]\right)+4f\left(\frac{1}{2}\log s^2\right)-4f\left(\frac12\log\left[\xi s\right]\right)-4f\left(\frac12\log\left[{\bar\xi}s\right]\right).
\end{equation}

The profile function, obtained setting $\lambda=0$ in \eqref{profile} is given by:
\begin{equation}\label{phicn}
 \varphi(\tau)=\frac{2\tau}{(1-\beta\tau)^n},   
\end{equation}
for $\tau\in [0,+\infty)$.

In order to prove the first part of Theorem \ref{main1}, i.e. that $(\mathds C^{n+1},c\omega_{\varphi,\beta})$ is not projectively induced for any $c$ and $\beta$, let us first show how to drop the dependence on the parameters $\beta$ and $c$.
\begin{lemma}\label{betac}
    Up to an affine change of coordinates on $\mathds C^{n}$, the metric $c\omega_{\varphi,\beta}$ on $\mathds C^{n+1}$ is equivalent to $\omega_{\varphi,-\frac{1}{c}}$.
\end{lemma}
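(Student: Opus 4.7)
I would prove Lemma~\ref{betac} by combining Proposition~\ref{prop: comega} with an explicit affine rescaling of the base $\mathds{C}^n$. The first step applies Proposition~\ref{prop: comega} in the flat case $\lambda=0$: it identifies $c\omega_{\varphi,\beta}$ with the Hwang--Singer metric $\omega_{\hat\varphi,\beta/c}$ built over the rescaled base $(\mathds{C}^n, c\omega_0)$ rather than the standard $(\mathds{C}^n,\omega_0)$. In effect, multiplying the K\"ahler form by $c$ absorbs the conformal factor into the profile parameter $\hat\beta=\beta/c$, at the price of a non-standard base metric on $\mathds{C}^n$.

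The second step is to perform the affine change of coordinates $z\mapsto z/\sqrt{c}$ on $\mathds{C}^n$, extended by the identity on the fibre coordinate $\xi$. Under this pullback the scaled base $c\omega_0$ returns to the standard flat $\omega_0$, and I would track the K\"ahler potential \eqref{potcn} through the change. Concretely, $c\Phi_\beta(z/\sqrt{c},\xi)$ becomes $\|z\|^2+4\,cf_\beta\!\left(\log|\xi|-\tfrac{\beta}{4c}\|z\|^2\right)$, and Proposition~\ref{cf} identifies $cf_\beta$ with a solution of the profile ODE of parameter $\beta/c$. The remaining algebraic step is to match this expression with $\Phi_{-1/c}(z,\xi)=\|z\|^2+4 f_{-1/c}\!\left(\log|\xi|+\tfrac{1}{4c}\|z\|^2\right)$, i.e.\ with the standard K\"ahler potential associated to $\omega_{\varphi,-1/c}$.

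The main obstacle is the bookkeeping required to reconcile the exponent $-\tfrac{\beta}{4c}$ appearing in the argument of $f$ in the transformed potential with the exponent $+\tfrac{1}{4c}$ expected in $\Phi_{-1/c}$, and to identify the resulting profile solution as $f_{-1/c}$ rather than $f_{\beta/c}$. This reconciliation relies on the scaling symmetry encoded in $\varphi_\beta(\tau)=2\tau/(1-\beta\tau)^n$ together with the freedom to rescale the fibre coordinate $\xi$: the latter shifts $t=\log|\xi|-\tfrac{\beta}{4}\|z\|^2$ by a constant, which corresponds to a bundle automorphism and amounts to a reparametrization of the initial condition for $f$. Once this is in place, the two-parameter family $\{c\omega_{\varphi,\beta}\}_{c>0,\,\beta<0}$ collapses, up to affine change of base coordinates, to the one-parameter family $\{\omega_{\varphi,-1/c}\}_{c>0}$, which is precisely the reduction that allows the projective-induction analysis carried out in Theorem~\ref{main1} to proceed independently of $\beta$.
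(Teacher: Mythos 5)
Your overall skeleton (absorb $c$ into the profile parameter via Proposition \ref{prop: comega}, then undo the rescaling of the base by an affine change of coordinates) follows the same lines as the paper's proof, but the order in which you treat the two parameters creates a gap that your proposed fix does not close. After applying Proposition \ref{prop: comega} and the substitution $z\mapsto z/\sqrt{c}$ you correctly arrive at the potential $\|z\|^2+4cf_\beta\bigl(\log|\xi|-\tfrac{\beta}{4c}\|z\|^2\bigr)$, and you correctly flag the remaining obstacle: the coefficient $-\tfrac{\beta}{4c}$ in the argument of $f$ must become $+\tfrac{1}{4c}$. But the mechanism you invoke --- rescaling the fibre coordinate $\xi$ --- cannot achieve this. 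A fibre rescaling $\xi\mapsto a\xi$, or more generally a bundle automorphism $\xi\mapsto a(z)\xi$ with $a$ holomorphic and nonvanishing, changes $t=\log|\xi|-\tfrac{\beta}{4c}\|z\|^2$ only by the pluriharmonic term $\log|a(z)|$; it never alters the coefficient of $\|z\|^2$, since $e^{\kappa\|z\|^2}$ is not the squared modulus of a holomorphic function for $\kappa\neq0$. So for $\beta\neq-1$ your argument stalls exactly at the point you identified, and the two-parameter family does not yet collapse to a one-parameter one.

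The missing ingredient is a second \emph{base} dilation, not a fibre automorphism. The paper sidesteps the issue by normalizing $\beta$ \emph{first}: the substitution $z'=\tfrac{1}{\sqrt{-\beta}}z$ sends $\omega_0$ to $-\beta\,\tfrac{i}{2}\partial\overline\partial\|z'\|^2$ (still flat) and converts the construction data of $\omega_{\varphi,-1}$ into those of $\omega_{\varphi,\beta}$ --- in particular the determinant $(1+\tau)^n$ of ${\rm Id}-\tau{\rm B}$ becomes $(1-\beta\tau)^n$ --- so that every $\omega_{\varphi,\beta}$ is affinely equivalent to $\omega_{\varphi,-1}$; only afterwards is the factor $c$ absorbed via $z'=\sqrt{c}z$ together with Proposition \ref{cf}, producing $\omega_{\varphi,-1/c}$. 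If you keep your order, you must append the further dilation $z\mapsto\sqrt{-\beta}\,z$ and track its effect on \emph{both} terms of the potential (the prefactor of $\|z\|^2$ rescales along with the argument of $f$), which is precisely the bookkeeping your appeal to a bundle automorphism was meant to avoid.
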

\begin{proof}
Let us first deal with $\beta$. The metric $\omega_{\varphi,-1}$ is obtained by a momentum construction on $(\C^{n},\omega_0)$ with profile $\varphi(\tau)=\frac{2\tau}{(1+\tau)^n}$. Perform a change of coordinates on $\C^{n}$ by setting $z'=\frac1{\sqrt{-\beta}} z$. Then:
$$
\omega_0=\frac i2\partial\bar\partial ||z||^2=-\beta\frac i2\partial\bar\partial ||z'||^2\;,
$$
Observe that while in the $z$ coordinates $\gamma=-\omega_0$, in the coordinates $z'$, $\gamma=\beta\omega_0$. The determinant of the endomorphism $\rm Id-\tau B$ (see Section \ref{sec: momentumconstruction} after formula \eqref{profile}), that in the $z$ coordinates was $(1+\tau)^n$, now in $z'$ reads $(1-\beta \tau)^n$. Thus the change of coordinates, transforms the metric $\omega_{\varphi,-1}$ on $\C^{n+1}$ in the metric $\omega_{\varphi,\beta}$.

Let us now prove that the multiplication of $\omega_{\varphi,-1}$ by $c>0$ is equivalent to consider $\omega_{\varphi,-\frac1c}$. 
By \eqref{potcn} a K\"ahler potential for $c\omega_{\varphi,-1}$ is given by $$
c\Phi(z,\xi)=c||z||^2+4cf\left(\frac12\log\left[|\xi|^2e^{\frac{1}2||z||^2}\right]\right).
$$
Performing a change of coordinates $z'=\sqrt{c}z$ we get
$$
c\Phi(z',\xi)=||z'||^2+4cf\left(\frac12\log\left[|\xi|^2e^{\frac{1}{2c}||z'||^2}\right]\right)\;.
$$
Conclusion follows observing that by Proposition \ref{cf}, if $f$ satisfies the ODE given by $\varphi(\tau)=\frac{2\tau}{(1+\tau)^n}$, then $cf$ satisfies the ODE given by $\varphi(\tau)=\frac{2\tau}{(1-\frac1c\tau)^n}$.
\end{proof}

\section{Hwang--Singer metrics on line bundles over $\CP^1$}\label{projectivecase}

 Let $L$ be a holomorphic line bundle over $\CP^1$ endowed with the Fubini-Study metric normalized so that $\lambda=1$, that is, in affine coordinates  $z=\frac{Z_1}{\frac{1}{2}Z_0}$ on $U_0=\{Z_0\neq0\}$
\[
\omega_{\text{FS}}=\frac{i}{2}\partial\overline\partial\; 4\log(1+\frac{1}{4}|z|^2).
\]
 Since $L$ is a holomorphic line bundle over $\CP^1$, then $L$ is of the form $\mathcal{O}(-k)$, $k\in\mathbb{Z}$.\\
The natural hermitian metric on the line bundle $\mathcal{O}(-1)$ on $\CP^1$ is given by restricting the hermitian metric of $\C^2$ to each fiber $l=L_x\subset\C^2$. So if $\{U_0,U_1\}$ is a cover of $\CP^1$ with
\[
U_\alpha=\{Z_\alpha\neq0\}\quad,\quad\alpha=0,1,
\]
 then
\[
h_\alpha=\frac{|Z_0|^2+|Z_1|^2}{|Z_\alpha|^2}\quad,\quad\alpha=0,1.
\]
So, on each open set $U_\alpha$, if we take $z$ as local coordinate, we have
\[
h(z)=1+\frac{1}{4}|z|^2.
\]
For $k>0$, the line bundles $\mathcal{O}(-k):=\mathcal{O}(k)^*=\mathcal{O}(1)\otimes\dots\otimes\mathcal{O}(1)$ inherit natural hermitian structures given by 
\[
h_k(z)=\left(1+\frac{1}{4}|z|^2\right)^k.
\]
The curvature form is then 
\[
\gamma=-i\partial\overline\partial\log\left(1+\frac{1}{4}|z|^2\right)^k=-\frac{k}{2}\omega_{\text{FS}} 
\]
and the line bundle is polarized, with $\lambda=1$ and $\beta=-\frac{k}{2}$, with $k$ a positive integer.\\
So the profile \ref{profile} reads 
\[
\varphi_k(\tau)=\frac{2\tau+\tau^2}{1+\frac{k}{2}\tau},
\]
and the momentum construction gives a 1-parameter family $\omega_k:=\omega_{\varphi,-\frac{k}{2}}$ of scalar flat K\"ahler metrics  on the polarized line bundle $\mathcal{O}(-k)$ described by the potentials
\begin{equation}\label{potenziale cp1}
\Psi(z,\xi)=4\log\left(1+\frac{1}{4}|z|^2\right)+4i\partial\overline\partial f\left(\frac{1}{2}\log\left[|\xi|^2 \left(1+\frac{1}{4}|z|^2\right)^k\right]\right).
\end{equation}

\begin{remark}
    For $k=0$, the metric $\omega_k$ reduces to the local product metric on $\CP^1\times\C$, see Remark \ref{prodmetrics}.
\end{remark}

In the proof of Theorem \ref{main2}, we need the following lemma.

\begin{lemma}\label{neccondcp1}
The metric $\omega_k$ on $\mathds C{\rm P}^1$ is not projectively induced for any $k\geq 3$.
\end{lemma}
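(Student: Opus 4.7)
The plan is to pass to a fibre of $\pi\!:\mathcal{O}(-k)\to\CP^1$ and exploit radial symmetry there. Since projective induction is preserved under restriction to complex submanifolds, if $\omega_k$ were projectively induced then so would be its restriction to any fibre $F\cong\mathbb{C}$. Choosing the fibre above $z=0$ in the trivialization of Section \ref{projectivecase} and using $h(0)=1$ in \eqref{potenziale cp1}, the restricted K\"ahler potential becomes
\[
\Psi(u) := 4f\bigl(\tfrac{1}{2}\log u\bigr),\qquad u := |\xi|^2,
\]
which, by Remark \ref{finfty} and the fact that the Hwang--Singer metric extends smoothly across the zero section, is smooth in $u$ on a neighborhood of $u=0$.

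Next I would reduce Calabi's criterion to inequalities on a single sequence. Centering the diastasis at $\xi=0$ yields $D_{0}(\xi)=\Psi(u)-\Psi(0)$, which is invariant under rotations $\xi\mapsto e^{i\theta}\xi$. Consequently the Calabi matrix of Theorem \ref{calabipi} is diagonal in the basis $\{\xi^m\bar\xi^n\}$, and the criterion collapses to the inequalities $c_m\geq 0$ for every $m\geq 1$, where
\[
e^{\Psi(u)-\Psi(0)}-1\;=\;\sum_{m\geq 1}c_m\,u^m.
\]

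The core of the argument is the computation of $c_4$ in closed form. Setting $\tau(u):=f'(\tfrac{1}{2}\log u)$, the defining ODE $f''=\varphi_k(f')$ with $\varphi_k(\tau)=(2\tau+\tau^2)/(1+(k/2)\tau)$ transforms into the singular ODE $d\tau/du=\varphi_k(\tau)/(2u)$. A power-series ansatz $\tau(u)=Cu+Du^2+Eu^3+Fu^4+\cdots$, with $C>0$ forced by positivity of the metric at $\xi=0$, produces $D,E,F$ as explicit polynomials in $k$ and $C$ upon matching powers of $u$; the very first step yields $D=-(k-1)C^2/2$. Substituting into $\Psi(u)=4f(\tfrac{1}{2}\log u)$ gives the Taylor coefficients $a_j:=\Psi^{(j)}(0)/j!$, and the identity
\[
c_4 \;=\; a_4 + a_1 a_3 + \tfrac{1}{2}a_2^2 + \tfrac{1}{2}a_1^2 a_2 + \tfrac{1}{24}a_1^4
\]
(from the four contributions $X$, $X^2/2$, $X^3/6$, $X^4/24$ with $X=\Psi(u)-\Psi(0)$) produces a closed form for $c_4$.

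I expect the resulting expression to be
\[
c_4 \;=\; \frac{C^4}{48}\,p(k),\qquad p(k) := -8k^3+48k^2-113k+105,
\]
and the conclusion then rests on two elementary facts about $p$: its derivative $p'(k)=-24k^2+96k-113$ has discriminant $96^2-4\cdot 24\cdot 113=-1632<0$, so $p$ is strictly decreasing on $\mathbb{R}$; and $p(3)=-18<0$. Hence $p(k)<0$ for every integer $k\geq 3$, whence $c_4<0$ (recall $C>0$), contradicting Calabi's criterion. The principal obstacle is the bookkeeping in the coupled ODE/Taylor expansions that produce the formula for $c_4$: one must track $\tau(u)$ up to order $u^4$ and carefully assemble the four contributions listed above. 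Everything else---the reduction via restriction to a fibre, the diagonalization of the Calabi matrix for a radial metric, and the final check on a cubic polynomial---is conceptually straightforward.
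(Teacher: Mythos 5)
Your argument is correct and is essentially the paper's proof: both isolate the diagonal Calabi coefficient attached to the monomial $\xi^4\bar\xi^4$ along a fibre of $\mathcal O(-k)$ and reduce its sign to that of the cubic $105-113k+48k^2-8k^3$, which is negative for all $k\geq 3$. The only (harmless) differences are that you centre the diastasis at the zero section $\xi=0$ and expand in $u=|\xi|^2$, whereas the paper centres at $\xi=s$, works with $\varphi$ and its derivatives at $\mu_0=f'\bigl(\tfrac{\log s^2}{2}\bigr)$, and then lets $\mu_0\to 0$; your claimed $c_4=\tfrac{C^4}{48}\,p(k)$ agrees, up to a positive factor, with the paper's $\lim_{\mu_0\to 0}P_k(\mu_0)$.
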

\begin{proof}
Let $p\in \mathcal O(-k)$ be the point of coordinates $(s,0)$ and let ${D_p}$ be the diastasis function for the metric $\omega_k$ as in \eqref{diastfibre}.
The fourth derivative of $ e^ {D_p}-1$ evaluated at $p$ is given by:
\begin{equation*}
    \begin{split}
         \frac{\partial^8 (e^{D_p} -1)}{\partial\xi^4\overline\partial\xi^4}|_p  = \frac{1}{s^8}\bigg(
         &+24 {f''}^4+216 {f''}^3+f''' (3 f^{(5)}-45 f^{(4)}-66)+(18 f^{(4)}-216 f'''+242) {f''}^2+\\
         &+\frac{1}{64} (f^{(8)}-24 f^{(7)}+232 f^{(6)}-1152 f^{(5)}+136 (f^{(4)})^2+3088 f^{(4)})+\\
         &+(f^{(6)}-18 f^{(5)}+125 f^{(4)}+36 {f'''}^2-396 f'''+36) f''+114 {f'''}^2\bigg)\left(\frac{\log s^2}{2}\right),
    \end{split}
\end{equation*}
that written in terms of $\varphi(\mu_0)$ with $\mu_0=f'\left(\frac{\log s^2}{2}\right)$, up to the multiplication by the positive constant $\frac{1}{s^8}$, reads $\frac{1}{64} \phi (\mu_0 ) A(\varphi(\mu_0))$, with (to simplify the notation we drop the dependence from $\mu_0$ in $\varphi(\mu_0)$ and its derivatives):
\begin{equation} 
    \begin{split}
       A(\varphi(\mu_0))= & \varphi ^{(6)} \varphi ^5+((\varphi ')^3-12 (\varphi ')^2+44 \varphi '-48)^2+\varphi (\varphi ')-2) (-8 (193 \varphi ''+968)+\\
        &(\varphi ')^3 (57 \varphi ''+392)-2 (\varphi '))^2 (255 \varphi ''+1624)+4 \varphi ' (383 \varphi ''+2200))+2 \varphi ^2 (16 (-36 \varphi ^{(3)}\\
        &+29 (\varphi '')^2+250 \varphi ''+432)+61 \varphi ^{(3)} (\varphi ')^3+2 (\varphi ')^2 (96 (9-2 \varphi ^{(3)}+ 45 (\varphi '')^2+436 \varphi '')+\\
        &-4 \varphi ' (-203 \varphi ^{(3)}+102 (\varphi '')^2+936 \varphi ''+1728))+
        2 \varphi ^3 (17 (\varphi '')^3+196 (\varphi '')^2\\
        &+2 \varphi ^{(4)} (19 (\varphi ')^2-66 \varphi '+58)+64 \varphi ^{(3)} (5 \varphi '-9)+12 (\varphi ^{(3)} (8 \varphi '-15)+48) \varphi ''+768)+\\
        &\varphi ^4 (15 (\varphi ^{(3)})^2+
        8 \varphi ^{(5)} (2 \varphi '-3)+\varphi ^{(4)} (26 \varphi ''+64)),
    \end{split}\nonumber
\end{equation}
since $\varphi(\mu_0)$ is positive, the sign of $\frac{\partial^8 (e^{D_p} -1)}{\partial\xi^4\overline\partial\xi^4}|_p $ is the same as that of $A(\varphi(\mu_0))$.
From \eqref{profile}, we get:
$$
\varphi_k^{(j)}(\tau)=(-1)^{j+1}\frac{8j!(k-1)k^{j-2}}{(2+k\tau)^{j+1}},
$$
that substituted into the expression of $A(\varphi(\mu_0))$ gives:
$$
\frac{\mu_0^3}{2^{7}3(2 + k \mu_0)^{12}}P_k(\mu_0),
$$
where $P_k(\mu_0)$ is the polynomial in $\mu_0$:
$$
P_k(\mu_0)=105 - 113 k + 48 k^2 - 8 k^3+\sum_{s=1}^{12}q_s(k)\mu_0^s,
$$
for given $q_s(k)$ that are not relevant for our analysis.
 Since $\mu_0$ can be chosen small enough in $ [0,+\infty)$ taking $s\to0$, the sign of $A(\varphi(\mu_0))$ is the same as the sign of $P_k(\mu_0)$ for positive values of $\mu_0$. Conclusion follows by noticing that
$$
\lim_{\mu_0\rightarrow 0}P_k(\mu_0)=105 - 113 k + 48 k^2 - 8 k^3,
$$
and the right hand side is negative for any $k\geq 3$.
\end{proof}

\section{Asymptotic expansion of $\omega_{\varphi,\beta}$ and proofs of Theorem \ref{main1} and Theorem \ref{main2}}\label{sec: englisexp}
Throughout this section let us write $(X,\omega_{\varphi,\beta})$ for either $X=\mathds C^{n+1}$ or $X=\mathcal O(-k)$. Let ${\hat L}$ be a holomorphic line bundle over $X$ and let $h_{\hat L}$ be an hermitian metric on ${\hat L}$ such that ${\rm Ric}(\hat h)=\omega_{\varphi,\beta}$. Notice that such an $({\hat L},\hat h)$ exists if and only if $\omega_{\varphi,\beta}$ is integral. In our case, this occurs since the base metric $\omega_M$ is an integral form and:
\[
[\omega_{\varphi,\beta}]=[\pi^*\omega_M+2i\partial\overline\partial f(t)]=[\pi^*\omega_M],
\]
where $\pi\!:L\rightarrow M$ is the projection given in Section \ref{sec: momentumconstruction} (here $M=\mathds C^n$ or $\mathds C{\rm P}^1$). 
Consider the tensor power $(\hat L^\alpha,\hat h_\alpha)$ and let $\mathcal H_\alpha$ be the space of global holomorphic sections of $\hat L^\alpha$. 
In order to define the $\epsilon$-function for $(X,\omega_{\varphi,\beta})$ we first need to show that $\mathcal H_\alpha\neq\{0\}$.
\begin{lemma}\label{1inH}
In the notation above, $1\in \mathcal H_\alpha$ for either $X=(\mathds C^{n+1},\omega_{\varphi,\beta})$ or $X=(\mathcal O(-k),\omega_k)$).
\end{lemma}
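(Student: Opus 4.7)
To show $\|1\|_{\hat h_\alpha}^2<\infty$, I would first identify $\hat L^\alpha$ and verify that ``$1$'' defines a global section. For $X=\C^{n+1}$, $H^2(X)=0$, so $\hat L$ may be taken trivial and ``$1$'' is literally the constant holomorphic function. For $X=\mathcal O(-k)$, the natural choice is $\hat L=\pi^*\mathcal O_{\CP^1}(2)$ equipped with the adjusted Hermitian metric $\hat h=(\pi^*h_{FS})\,e^{-2f(t)}$, which realizes $\mathrm{Ric}(\hat h)=\omega_k$; the constant ``$1$'' in the trivialization over $\pi^{-1}(U_0)$ is then the local expression of the pullback of the monomial section $Z_0^{2\alpha}\in H^0(\CP^1,\mathcal O(2\alpha))$, hence a global holomorphic section of $\hat L^\alpha$.

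Next, using the convention $\omega_{\varphi,\beta}=\tfrac{i}{2}\partial\bar\partial\Psi$ with $\Psi$ the K\"ahler potential from \eqref{kpotential}, one has $\hat h_\alpha=e^{-\alpha\Psi/2}$ in the trivialization. From \eqref{omegaphibeta} and the substitution $d\rho/\rho=d\tau/\varphi(\tau)$, the volume form simplifies to
\[
\frac{\omega_{\varphi,\beta}^{n+1}}{(n+1)!}=\frac{(1-\beta\tau)^n}{n!}\,\pi^*\omega_M^n\wedge d\tau\wedge d\theta,
\]
and since $\tau$ is independent of the base point and $f$ depends on the ambient coordinates only through $t$, the norm factorises as
\[
\|1\|_{\hat h_\alpha}^2=2\pi\!\left(\int_M e^{-\alpha\Phi/2}\frac{\omega_M^n}{n!}\right)\!\!\left(\int_0^\infty\!(1-\beta\tau)^n\,e^{-2\alpha F(\tau)}\,d\tau\right),
\]
with $F(\tau):=f\circ(f')^{-1}(\tau)$ satisfying $F'(\tau)=\tau/\varphi(\tau)$.

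The base integral is elementary (a Gaussian $(2\pi/\alpha)^n$ in the flat case, $4\pi/(2\alpha+1)$ in the $\mathcal O(-k)$ case) and in particular finite. For the fibre integral, I would integrate $F'$ using the explicit profiles: from \eqref{phicn}, in the flat case $F(\tau)=\big((1-\beta\tau)^{n+1}-1\big)/\big(-2\beta(n+1)\big)\sim c\,\tau^{n+1}$ at infinity; in the $\mathcal O(-k)$ case $F'(\tau)=(1+\tfrac{k}{2}\tau)/(2+\tau)\to k/2$, so $F(\tau)\sim(k/2)\tau$. In both cases the exponential decay of $e^{-2\alpha F(\tau)}$ dominates the polynomial factor $(1-\beta\tau)^n$, and the integrand is bounded near $\tau=0$, yielding convergence.

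The main obstacle is the first step in the case $X=\mathcal O(-k)$: identifying the topologically nontrivial $\hat L$ and recognising that the local constant section ``$1$'' extends globally via the monomial $Z_0^{2\alpha}$. Once this identification is in place, the convergence estimates in momentum coordinates are routine consequences of the explicit form of $\varphi$.
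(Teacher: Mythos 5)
Your proof is correct, but it follows a genuinely different route from the paper's. The paper stays in polar coordinates $(r_0,\dots,r_n)$ on the total space and controls the integrand by hand: for $\C^{n+1}$ it drops the factor $e^{-\alpha\sum_j r_j}$, integrates by parts in $r_0$ using $\tfrac{d}{dr_0}e^{-4\alpha f}=-\tfrac{2\alpha f'}{r_0}e^{-4\alpha f}$, and then needs the lower bound $f\ge\tfrac1n\sum_j\log r_j$ for large $r_j$; for $\mathcal O(-k)$ it performs two integrations by parts and closes a self-referential inequality $(1-\tfrac{1}{4\alpha})I\le C$, obtaining convergence only for $\alpha>n/4$, resp.\ $\alpha>1/4$. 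You instead pass to momentum coordinates $(z,\tau,\theta)$, observe that both the weight $e^{-\alpha\Phi/2}e^{-2\alpha F(\tau)}$ and the volume form $\tfrac{(1-\beta\tau)^n}{n!}\pi^*\omega_M^n\wedge d\tau\wedge d\theta$ split, and apply Fubini to reduce everything to the one-dimensional integral $\int_0^\infty(1-\beta\tau)^n e^{-2\alpha F(\tau)}d\tau$ with $F'=\tau/\varphi$ explicitly integrable ($F\sim c\,\tau^{n+1}$ in the flat case, $F\sim\tfrac k2\tau$ on $\mathcal O(-k)$). This is cleaner, treats both cases uniformly, yields convergence for every $\alpha>0$ rather than only for $\alpha$ large, and in the flat case does not even need the reduction to $\beta=-1$ via Lemma \ref{betac}. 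Your factorisation is legitimate because, for fixed $z$, the map $\rho\mapsto\tau=f'(\log\rho+\tfrac12\log h(z))$ is an increasing bijection of $(0,\infty)$ onto $(0,\infty)$ (Remark \ref{finfty}), so the $\tau$-range is independent of the base point even though $\tau$ itself is not; you should phrase it that way rather than saying ``$\tau$ is independent of the base point.''

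Two small points to fix. First, with the paper's normalisation ($\lambda=1$, potential $4\log(1+\tfrac14|z|^2)$, weight $(1+\tfrac14|z|^2)^{-4\alpha}$ on the base), the restriction of $\hat L$ to the zero section is $\mathcal O(4)$ rather than $\mathcal O(2)$, so the global section represented by ``$1$'' over $\pi^{-1}(U_0)$ is the pullback of $Z_0^{4\alpha}$; the structure of your argument is unaffected, and in fact this identification is a point the paper's proof glosses over entirely (it only checks integrability over the single chart $\pi^{-1}(U_0)$, whose complement is a null set). Second, your weight $e^{-\alpha\Psi/2}$ matches the convention of Theorem \ref{esistenzaepsilon} while the paper's display \eqref{intC} uses $e^{-\alpha\Phi}$; this is only a rescaling of $\alpha$ and does not affect convergence.
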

\begin{proof}
Observe that by formula (2.21) in \cite{hwangsinger} we have:
$$
\frac{\omega_{\varphi,\beta}^{n+1}}{(n+1)!}=\varphi Q\det(g_M)\frac{1}{|\xi|^2}\left(\frac{i}{2}\right)^{n+1}d\xi\wedge d\bar \xi\prod_{j=1}^n dz_j\wedge d\bar z_j.
$$ 

Let us deal first with the case $X=\mathds C^{n+1}$. In this case, $\mathcal H_\alpha$ is the 
weighted Hilbert space of global holomorphic functions over $\C^{n+1}$ that are $L^2$ limited in norm, namely:
$$
\mathcal H_\alpha=\left\{u\in {\rm Hol}(\C^{n+1})|\int_{\C^{n+1}}|u|^2e^{-\alpha\Phi}\frac{\omega_{\varphi,\beta}^{n+1}}{(n+1)!}<+\infty\right\},
$$
where $\Phi$ is given by \eqref{potcn}.

Due to Lemma \ref{betac}, we can set $\beta=-1$. In order to prove that $1\in \mathcal H_\alpha$, it is enough to check the convergence of the integral:
\begin{equation}\label{intC}
    \begin{split}
\int_{\mathds C^{n+1}}&e^{-\alpha(||z||^2+4f(t)))}\frac{2f'(t)}{|\xi|^2}\left(\frac{i}{2}\right)^{n+1}d\xi\wedge d\bar \xi\prod_{j=1}^n dz_j\wedge d\bar z_j\\
=&\pi^{n+1}\int_0^\infty\cdots\int_0^\infty\int_0^\infty e^{-\alpha(\sum_jr_j+4f(\hat t)))}\frac{2f'(\hat t)}{r_0}dr_0\prod_{j=1}^n dr_j,
\end{split}
\end{equation}
where we set polar coordinates $\xi:=\rho_0e^{i\theta_0}$, $z_j:=\rho_je^{i\theta_j}$ and $r_j:=\rho_j^2$, $j=0,\dots, n$, and we denote $\hat t=\frac12(\log r_0+\sum_j r_j)$.
The function under the integral is positive and smooth, since $\frac{\varphi(t)}{|\xi|^2}\rightarrow g_{0\overline0}$ as $|\xi|^2\rightarrow 0$, so its integral converges inside any closed  ball of ray $R>0$ centered at the origin. Thus, it is enough to check that the integral outside the ball is finite. Using that the function under the integral is positive and that:
$$
e^{-\alpha(\sum_jr_j+4f(\hat t)))}\frac{2f'(\hat t)}{r_0}\leq e^{-\alpha(4f(\hat t)))}\frac{2f'(\hat t)}{r_0}=-\frac1\alpha\frac{d}{dr_0}e^{-4\alpha f(\hat t)},
$$
we have:
\begin{equation}\label{intC2}
    \begin{split}
\int_R^\infty\cdots\int_R^\infty\int_R^\infty e^{-\alpha(\sum_jr_j+4f(\hat t)))}\frac{2f'(\hat t)}{r_0}dr_0\prod_{j=1}^n dr_j
\leq&  -\frac{1}\alpha\int_R^\infty\cdots\int_R^\infty\int_R^\infty\frac{d}{dr_0}e^{-4\alpha f(\hat t)}dr_0\prod_{j=1}^n dr_j\\
=&  \frac{1}\alpha\int_R^\infty\cdots\int_R^\infty e^{-4\alpha f(\frac12\log R_0+\frac12\sum_j r_j)}\prod_{j=1}^n dr_j.
\end{split}
\end{equation}
The last integral in \eqref{intC2} converges at least for $\alpha>n/4$ and for a large enough $R$, since:
$$
 e^{-4\alpha f(\frac12\log R_0+\frac12\sum_j r_j))}\leq \frac{1}{r_1^{4\alpha/n}\cdots r_n^{4\alpha/n}}.
$$
More precisely, since $f$ is an increasing function and $\frac{\partial^2}{\partial r_j^2}f=\frac14f''>0$, there exists $R\in \mathds R$ such that for $r_j>R$, $j=1,\dots, n$,
$$
f\left(\frac12\log R_0+\frac12\sum_j r_j\right)\geq f(r_j)\geq \log r_j,
$$ 
 thus
$$
f\left(\frac12\log R_0+\frac12\sum_j r_j\right)\geq \frac1n\sum_{j=1}^n \log r_j.
$$

Let us now deal with $X=\mathcal O(-k)$ over $\mathds C{\rm P}^1$. In this case it is enough to check the convergence of the following integral over the chart $\mathcal U_0\times \mathds C\simeq \mathds C^2$:
\begin{equation}
    \begin{split}
\int_{\mathds C^{2}}&\frac{e^{-4\alpha f(t)}}{(1+\frac14|z|^2)^{4\alpha+2}}\frac{2f'(t)+f'(t)^2}{|\xi|^2}\left(\frac i2\right)^2d\xi\wedge d\bar \xi\wedge dz\wedge d\bar z\\
=&\pi^{2}\int_0^\infty\int_0^\infty \frac{e^{-4\alpha f(\hat t)}}{(1+\frac14r_1)^{4\alpha+2}}\frac{2f'(\hat t)+f'(\hat t)^2}{r_0}dr_0 dr_1,
\end{split}\nonumber
\end{equation}
where we set polar coordinates $\xi=\rho_0e^{i\theta_0}$, $z_1=\rho_1e^{i\theta_1}$, $r_j:=\rho_j^2$, $j=0$, $1$, and set $\hat t:=\frac12\log r_0+\frac k2\log(1+\frac14r_1)$.
As before, since the function we are integrating is smooth on any closed ball of ray $R>0$ (since $\frac{\varphi(t)}{|\xi|^2}\rightarrow g_{0\overline0}$ as $|\xi|^2\rightarrow 0$), we reduce to check that the integral converges outside such ball. First observe that
\[
I_1:=\int_R^\infty  -e^{-4\alpha f(\hat t(r_0))}\frac{f'(\hat t(r_0))}{r_0}dr_0=\frac{1}{2\alpha}\int_R^\infty \frac{d}{dr_0}e^{-4\alpha f}=\frac{1}{2\alpha}\left[e^{-4\alpha f}\right]_R^\infty<\infty
\]
and
\begin{equation}
\begin{split}
I_2:\!&=\int_R^\infty\int_R^\infty \frac{e^{-4\alpha f(\hat t)}}{(1+\frac14r_1)^{2}}\frac{2f'(\hat t)}{r_0}dr_0 dr_1=-\frac{1}{\alpha}\int_R^\infty\int_R^\infty \frac{1}{(1+\frac{1}{4}r_1)^2}\frac{d}{dr_0}e^{-4\alpha f}dr_0dr_1\\
&=-\frac{1}{\alpha}\int_R^\infty\frac{1}{(1+\frac{1}{4}r_1)^2} \left[e^{-4\alpha f}\right]_R^\infty dr_1=\frac{1}{\alpha}\int_R^\infty\frac{e^{-4\alpha f(\hat t(r_1))}}{(1+\frac{1}{4}r_1)^2}dr_1<\infty.
\end{split}\nonumber
\end{equation}
So, since $\alpha>0$, we have
\begin{equation}
    \begin{split}
\int_R^\infty\int_R^\infty \frac{e^{-4\alpha f(\hat t)}}{(1+\frac14r_1)^{4\alpha+2}}\frac{2f'(\hat t)+f'(\hat t)^2}{r_0}dr_0 dr_1=&\int_R^\infty\int_R^\infty \frac{e^{-4\alpha f(\hat t)}}{(1+\frac14r_1)^{4\alpha+2}}\frac{2f'(\hat t)}{r_0}dr_0 dr_1+\\
&+\int_R^\infty\int_R^\infty\frac{e^{-4\alpha f(\hat t)}}{(1+\frac14r_1)^{4\alpha+2}}\frac{f'(\hat t)^2}{r_0}dr_0 dr_1\\
\leq&\ I_2+\int_R^\infty\int_R^\infty\frac{e^{-4\alpha f(\hat t)}}{(1+\frac14r_1)^{2}}\frac{f'(\hat t)^2}{r_0}dr_0 dr_1.
\end{split}\nonumber
\end{equation}
It remains to check that:
\[
I:=\int_R^\infty\int_R^\infty\frac{e^{-4\alpha f(\hat t)}}{(1+\frac14r_1)^{2}}\frac{f'(\hat t)^2}{r_0}dr_0 dr_1
\]
converges. Integrating by parts, since:
\[
\frac{e^{-4\alpha f(\hat t)}}{(1+\frac14r_1)^{2}}\frac{f'(\hat t)^2}{r_0}=-\frac{2}{k\alpha}\frac{d}{dr_1}e^{-4\alpha f(\hat t)}\frac{f'(\hat t)}{r_0(1+\frac{1}{4}r_1)}.
\]
we get: 
\begin{equation}\label{integrali}
    \begin{split}
        I=&-\frac{2}{k\alpha}\int_R^\infty\int_R^\infty 
\frac{d}{dr_1}e^{-4\alpha f(\hat t)}\frac{f'(\hat t)}{r_0(1+\frac{1}{4}r_1)}dr_0 dr_1\\
        =&-\frac{2}{k\alpha} \left\{ \int_R^\infty \left[ e^{-4\alpha f}\frac{f'}{r_0(1+\frac{1}{4}r_1)}\bigg|_R^\infty-\frac{1}{r_0}\int_R^\infty e^{-4\alpha f}\frac{f'' \frac{k}{8}-\frac{f'}{4}}{(1+\frac{1}{4}r_1)^2}dr_1\right]dr_0\right\}\\
        =&-\frac{2}{k\alpha}\left\{\frac{I_1}{(1+\frac{1}{4}R)} -\frac k8\int_R^\infty\int_R^\infty \frac{e^{-4\alpha f}f'' }{r_0(1+\frac{1}{4}r_1)^2}dr_1dr_0+\frac18\int_R^\infty\int_R^\infty \frac{2f'e^{-4\alpha f}}{r_0(1+\frac{1}{4}r_1)^2}dr_1dr_0\right\}\\
        =&-\frac{2}{k\alpha}\left\{\frac{I_1}{(1+\frac{1}{4}R)}-\frac k8\int_R^\infty\int_R^\infty  \frac{e^{-4\alpha f}(2f'+(f')^2)}{r_0(1+\frac{1}{4}r_1)^2(1+\frac{k}{2}f')}dr_1dr_0+\frac{I_2}{8}\right\}\\
        \leq&-\frac{2}{k\alpha}\left\{\frac{I_1}{(1+\frac{1}{4}R)}-\frac k8\int_R^\infty\int_R^\infty  \frac{e^{-4\alpha f}(2f'+(f')^2)}{r_0(1+\frac{1}{4}r_1)^2}dr_1dr_0+\frac{I_2}{8}\right\}\\
        =&-\frac{2}{k\alpha}\left\{\frac{I_1}{(1+\frac{1}{4}R)}-\frac{k}{8}I_2-\frac{k}{8}I+\frac{I_2}{8}\right\}
    \end{split}
\end{equation}
where in the second equality we used that
$$
\lim_{r_1\rightarrow +\infty}\frac{f'e^{-4\alpha f}}{(1+\frac{1}{4}r_1)}=0
$$
as follows by applying de l'Hopital and using that $f''=\frac{2f'+f'^2}{1+\frac k2 f'}$. Further the inequality follows by $(1+\frac{k}{2}f')>1$, since $f'$ is a positive function.
From \eqref{integrali} we obtain:
\[
\left(1-\frac{1}{4\alpha}\right)I\le C
\]
for a suitable constant $C\in\R$. In particular $I$ converges at least for $\alpha>\frac14$.
\end{proof}

Unlike the compact case, for a noncompact manifold, it is not guaranteed in general the existence of the Engli\v{s} expansion of the function $\epsilon_{kg}$ and only partial results in this direction are known (see e.g. \cite{englis} for the case of strongly pseudoconvex bounded domains in $\C^n$ with real analytic boundary). In \cite[Theorem 6.1.1]{mamarinescu} X. Ma and G. Marinescu state sufficient conditions for the expansion to exist in a very general context. A version of their theorem adapted to our setting reads as follows (cf. \cite[Theorem 7]{loizeddazuddas}):
\begin{theorem}\label{th: mamarinescu}
Let $(X,g,\omega)$ be a complete K\"ahler manifold and let $(\hat L,\hat h)$ be an hermitian line bundle on $X$. Then $\epsilon_{\alpha g}$ admits an asymptotic expansion in $\alpha$ with coefficients given by \ref{coeffespan} provided there exist constants $l>0$ and $c>0$ such that
\begin{equation}\label{condma}
iR^{\hat L} > l\;\omega\quad,\quad iR^{\det}>-c\;\omega\quad,\quad |\partial\omega|_g<c\;,
\end{equation}
where $R^{\det}$ denotes the curvature of the connection on $\det(T^{1,0}X)$ induced by $g$ and $R^{\hat L}$ the curvature of the connection on $\hat L$ induced by the hermitian metric $\hat h$. 
\end{theorem}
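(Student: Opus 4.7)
The plan is to derive this as a direct translation of \cite[Theorem 6.1.1]{mamarinescu} into the bundle-adapted setting used throughout the paper. That result establishes, in the general framework of a complete K\"ahler manifold equipped with a positive Hermitian line bundle, the existence of an asymptotic expansion for the Bergman kernel on the diagonal under three hypotheses: a uniform positive lower bound for $iR^{\hat L}$, a uniform lower bound for the curvature of the canonical line bundle $\det(T^{1,0}X)^{-1}$, and a uniform bound on the torsion $\partial\omega$ of the Chern connection (which of course vanishes pointwise in the K\"ahler case but must be controlled in the larger symplectic framework of Ma--Marinescu). Translating these into the language of \eqref{condma} for our K\"ahler setting gives exactly the three displayed conditions.

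First I would spell out the identification of the Bergman kernel on the diagonal with $\epsilon_{\alpha g}$: writing $\{s_j\}$ for an $L^2$-orthonormal basis of $\mathcal H_\alpha=H^0_{(2)}(X,\hat L^\alpha)$, the pointwise sum $\sum_j\hat h_\alpha(s_j(x),s_j(x))$ is precisely the restriction to the diagonal of the Bergman kernel of $(\hat L^\alpha,\hat h_\alpha)$, so that the expansion of \cite[Thm 6.1.1]{mamarinescu} becomes an expansion of $\epsilon_{\alpha g}$. Next I would invoke the universal, locally computable form of the coefficients $a_j$ established in the same reference: each $a_j(x)$ is a polynomial in the curvature of $g$ and its covariant derivatives at $x$, independent of the global geometry. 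In particular, the first three $a_j$'s must coincide with the ones computed in the compact setting by Lu \cite{lu} and in the noncompact setting by Engli\v{s} \cite{engliscoeff}, yielding the formulas \eqref{coeffespan}. This is the key point that makes the statement useful for us: we obtain not merely existence of an expansion but also the explicit leading coefficients.

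Finally, to close the argument I would check that the meaning of \eqref{asympt} in the noncompact setting is inherited from \cite[Thm 6.1.1]{mamarinescu}, namely uniform $C^r$ convergence on compact subsets $K\subseteq X$ with a constant depending on $K$, $l$ and $r$. I would note that this is precisely the formulation needed for the lemma that approximates $\omega_{\varphi,\beta}$ by projectively induced metrics via the coherent states maps, and matches the statement already used in \cite[Thm. 7]{loizeddazuddas} for the noncompact analogue.

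The main obstacle here is not the proof itself, which is a quotation, but the verification of hypotheses \eqref{condma} for the specific metrics $g_\beta$ on $\mathds C^{n+1}$ and $g_k$ on $\mathcal O(-k)$; this verification, together with the application of the present theorem to deduce the existence of the Engli\v{s} expansion in these two cases and the resulting approximation by projectively induced metrics, is what the remainder of Section \ref{sec: englisexp} has to carry out.
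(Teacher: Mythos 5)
Your proposal is correct and coincides with what the paper actually does: Theorem \ref{th: mamarinescu} is not proved in the text but is stated as a direct adaptation of \cite[Theorem 6.1.1]{mamarinescu} (cf. \cite[Theorem 7]{loizeddazuddas}), via exactly the identification of $\epsilon_{\alpha g}$ with the Bergman kernel on the diagonal and the universality of the coefficients $a_j$ that you describe. You also correctly locate the genuine mathematical content elsewhere, namely in the verification of the hypotheses \eqref{condma} for the Hwang--Singer metrics, which the paper carries out in Theorem \ref{esistenzaepsilon}.
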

In the following theorem we prove that conditions \eqref{condma} hold for Hwang--Singer metrics based on a K\"ahler--Einstein polarised manifold.  
Recall that from \cite{hwangsinger} Section 2, the Ricci form $\rho_\varphi$ of $\omega_{\varphi,\beta}$ is given by
$$
\rho_\varphi=\pi^*\rho_M+\frac{1}{2Q}(\varphi Q)'(\tau)\pi^*\gamma-\frac{1}{2\varphi}[\frac{1}{Q}(\varphi Q)']' d\tau\wedge d^c\tau,
$$
thus, when $\omega_M$ is polarized we have:
\begin{equation}\label{ricciform}
\begin{split}
\rho_\varphi
&=\pi^*(\lambda\omega_M)+\frac{1}{2Q}(\varphi Q)'(\tau)\pi^*(\beta\omega_M)-\frac{1}{2\varphi}[\frac{1}{Q}(\varphi Q)']' d\tau\wedge d^c\tau\\
&=\big(\lambda+\frac{\beta}{2Q}(\varphi Q)'\big)\pi^*\omega_M-\frac{1}{2\varphi}\big[\frac{1}{Q}(\varphi Q)'\big]' d\tau\wedge d^c\tau.
\end{split}
\end{equation}

\begin{theorem}\label{esistenzaepsilon}
    Let $\omega_{\varphi,\beta}$ be the the Hwang--Singer metric on a polarized line bundle over a K\"ahler--Einstein manifold with integral K\"ahler form. Then, if $\mathcal{H}_\alpha\neq\{0\}$, the Engli\v{s} expansion of the function $\epsilon_{\alpha g_{\varphi,\beta}}$ exists and the coefficients $a_j$ are given by \ref{coeffespan}. 
\end{theorem}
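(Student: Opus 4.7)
My plan is to verify the three hypotheses of Theorem~\ref{th: mamarinescu} for the pair $(\hat L,\hat h)$ and the K\"ahler form $\omega:=\omega_{\varphi,\beta}$. Two of the three conditions in \eqref{condma} are essentially immediate. Because $\hat h$ was chosen so that ${\rm Ric}(\hat h)=\omega_{\varphi,\beta}$, the curvature satisfies $iR^{\hat L}=\omega$, so the first inequality holds with any $l\in(0,1)$. The third inequality $|\partial\omega|_g<c$ is trivial because $\omega_{\varphi,\beta}$ is K\"ahler and therefore $\partial\omega=0$ identically.

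The substance of the proof will be the Ricci lower bound $iR^{\det}>-c\,\omega$. I would use the explicit expression \eqref{ricciform} for $\rho_{\varphi}$ together with the decomposition $\omega_{\varphi,\beta}=(1-\beta\tau)\pi^*\omega_M+\varphi^{-1}\,d\tau\wedge d^c\tau$ derived from \eqref{omegaphibeta}. Since $\pi^*\omega_M$ and $d\tau\wedge d^c\tau$ occupy complementary (horizontal/vertical) directions, the positivity of $\rho_\varphi+c\,\omega_{\varphi,\beta}$ reduces to the non-negativity of the two scalar coefficients
\[
A(\tau):=\lambda+\frac{\beta(\varphi Q)'}{2Q}+c(1-\beta\tau),\qquad B(\tau):=c-\tfrac12\Bigl(\tfrac{(\varphi Q)'}{Q}\Bigr)',
\]
on $[0,+\infty)$, for a single sufficiently large constant $c>0$.

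The key step is algebraic: using the explicit polarized profile \eqref{profile} with $Q=(1-\beta\tau)^n$, a short manipulation gives $(\varphi Q)'=2+\tfrac{2\lambda}{\beta}\bigl(1-(1-\beta\tau)^n\bigr)$, after which $A$ and $B$ telescope to
\[
A(\tau)=\frac{\beta+\lambda}{(1-\beta\tau)^n}+c(1-\beta\tau),\qquad B(\tau)=c-\frac{n(\beta+\lambda)}{(1-\beta\tau)^{n+1}}.
\]
Since $\beta<0$ gives $1-\beta\tau\ge 1$ on $[0,+\infty)$, both quantities are uniformly bounded below and any choice $c\ge n|\beta+\lambda|+1$ renders them non-negative for all $\tau$. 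This verifies the remaining hypothesis of Theorem~\ref{th: mamarinescu} and yields the Engli\v{s} expansion with coefficients as in \eqref{coeffespan}.

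The main obstacle I anticipate is executing the algebraic cancellations in $A$ and $B$ cleanly while keeping track of the sign of $\beta$; the telescoping to just $\frac{\beta+\lambda}{(1-\beta\tau)^n}$ is a clean identity but requires care because several terms that look genuinely $\tau$-dependent in fact collapse. Once these identities are in hand, the resulting pointwise bounds follow from the elementary estimate $1-\beta\tau\ge 1$, and no further analytic input beyond the Ma--Marinescu machinery is needed.
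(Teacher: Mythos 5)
Your proposal is correct and follows essentially the same route as the paper: both verify the three Ma--Marinescu conditions, with $iR^{\hat L}=\omega_{\varphi,\beta}$ and $\partial\omega=0$ immediate, and both reduce the Ricci lower bound to the uniform boundedness of $\frac{(\varphi Q)'}{Q}$ and its derivative via the identity $(\varphi Q)'=2+\frac{2\lambda}{\beta}\bigl(1-(1-\beta\tau)^n\bigr)$. Your telescoped expressions for $A(\tau)$ and $B(\tau)$ check out and merely make explicit the bounds the paper states directly.
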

\begin{proof}
   Let us check that conditions \eqref{condma} hold for $\omega_{\varphi,\beta}$. 
    The first condition is satisfied for $l\in (0,1)$ since
    \[
    iR^{\hat L}=-i\partial\overline\partial\log \hat h=-i\partial\overline\partial\log e^{-\frac12\Psi}=\omega_{\varphi,\beta}\;,
    \]
    while the third condition is satisfied for every positive $c>0$, since $\partial\omega_{\varphi,\beta}=0$, being the metric K\"ahler. Let us now deal with the second condition. We want to show that there exists a positive $c>0$ such that the form given by
    \[    iR^{\det}+c\omega_{\varphi,\beta}=\rho_\varphi+c\omega_{\varphi,\beta}
    \]
    is positive. Thus, using \ref{ricciform} and \ref{omegaphibeta}, it is sufficient to show that there exists $c>0$ such that
   $$
        \left(\lambda+\frac{\beta}{2Q}(\varphi Q)'+c(1-\tau\beta)\right)\omega_M+\left(-\frac{1}{2\varphi}\big[\frac{1}{Q}(\varphi Q)'\big]' +\frac{c}{\varphi}\right)d\tau\wedge d^c\tau> 0.
   $$
    Being $\lambda\ge0$ and $\varphi>0$, we show that there exists $c>0$ such that
    \[
    \begin{cases}
        \frac{\beta}{2Q}(\varphi Q)'+c(1-\tau\beta)>0\\
        -\frac{1}{2}\big[\frac{1}{Q}(\varphi Q)'\big]' +c\ge0\;,
    \end{cases}
    \]
    namely, we want a positive $c$ that satisfies 
     \[
    \begin{cases}
        c>-\frac{\beta}{2Q}(\varphi Q)'\frac{1}{1-\tau\beta}\\
        c\ge2\big(\frac{1}{Q}(\varphi Q)'\big)'\;.
    \end{cases}
    \]
    Since $\frac{1}{1-\tau\beta}\le1$, we reduce to prove that 
    \[
    \frac{(\varphi Q)'}{Q}\quad,\quad \bigg(\frac{(\varphi Q)'}{Q}\bigg)'
    \]
    are limited functions, proving the existence of such a $c$. Using the expression of the profile function \ref{profile} and since $Q(\tau)=(1-\beta\tau)^n$, we get
    \begin{align*}
     \frac{(\varphi Q)'}{Q}&
     =\frac{2 \lambda}{\beta (1-\beta \tau)^{n}}-\frac{2 \lambda}{\beta}+\frac{2} {(1-\beta \tau)^{n}}\\
     &<-\frac{2 \lambda}{\beta}+\frac{2} {(1-\beta \tau)^{n}}\\
     &<-\frac{2 \lambda}{\beta}+2,
    \end{align*}
    and
    \begin{align*}
        \bigg(\frac{(\varphi Q)'}{Q}\bigg)'&=\frac{2 n (\beta+\lambda)} {(1-\beta \tau)^{n+1}}\leq 2 n (\beta+\lambda),
    \end{align*}
  concluding the proof.
\end{proof}

From the existence of an asymptotic expansion of the $\epsilon$-function it follows that the metric can be approximated by a sequence of projectively induced ones in the following way (cf. \cite[Corollary 9]{loizeddazuddas}).
\begin{lemma}\label{convergence}
Let $(M,g)$ be a polarized K\"ahler manifold such that the $1\in\mathcal H$, where $\mathcal H$ is the weighted Hilbert space of holomorphic functions on $M$ limited in norm. Then the $\epsilon$-function associated to $g$ exists and, if it admits an asymptotic expansion whose coefficients are given by \eqref{coeffespan}, then $g$ can be approximated by a sequence of projectively induced K\"ahler metrics.
\end{lemma}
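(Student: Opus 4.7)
The plan is to deduce existence of the $\epsilon$-function from the hypothesis $1\in\mathcal H$ and then to extract the approximating sequence from the coherent states identity \eqref{csm} combined with the assumed asymptotic expansion of $\epsilon_{\alpha g}$. Since $1\in\mathcal H$ we have $\mathcal H\neq\{0\}$, so $\mathcal H$ admits an orthonormal basis $\{s_j\}$ and the sum in \eqref{epsilondef} defines $\epsilon_g$; the analogue at level $\alpha$ holds whenever $1\in\mathcal H_\alpha$, which was verified for large $\alpha$ in Lemma~\ref{1inH}.

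For the approximation, I would take as candidate sequence the rescaled pullbacks
\[
g_\alpha := \tfrac{1}{\alpha}\, F_\alpha^* g_{FS}, \qquad \alpha\to\infty,
\]
where $F_\alpha\!:M\to\CP^{d_\alpha}$ is the coherent states map associated with $\hat L^\alpha$. Each $F_\alpha^* g_{FS}$ is a projectively induced K\"ahler metric by construction, and $g_\alpha$ is its standard Bergman-type rescaling. By \eqref{csm},
\[
g_\alpha = g + \frac{1}{2\alpha}\,\partial\overline\partial \log \epsilon_{\alpha g},
\]
so the task reduces to proving that $\frac{1}{\alpha}\,\partial\overline\partial\log\epsilon_{\alpha g}\to 0$ in $C^r$ on every compact $K\subset M$.

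The asymptotic expansion enters as follows. Since $a_0\equiv 1$, applying \eqref{asympt} with $l=0$ forces $\epsilon_{\alpha g}/\alpha^n\to 1$ in $C^r(K)$; in particular, $\epsilon_{\alpha g}/\alpha^n$ is bounded below away from $0$ on $K$ for $\alpha$ large. Writing
\[
\log\epsilon_{\alpha g}(x) \;=\; n\log\alpha + \log\!\left(\frac{\epsilon_{\alpha g}(x)}{\alpha^n}\right),
\]
the first summand is constant in $x$ and killed by $\partial\overline\partial$. For the second, the uniform lower bound combined with the $C^r$ chain rule for $\log$ converts the $O(\alpha^{-1})$ tail of \eqref{asympt} (taken with $l=1$) into a $C^r(K)$ bound of size $O(\alpha^{-1})$ on $\log(\epsilon_{\alpha g}/\alpha^n)$. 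The second-order operator $\partial\overline\partial$ preserves this order, and the prefactor $\tfrac{1}{\alpha}$ finally yields $O(\alpha^{-2})$. Therefore $g_\alpha\to g$ in $C^r$ on compact subsets of $M$, giving the desired approximation.

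The main obstacle I anticipate is the handling of the logarithm: the $C^r$-chain-rule step requires uniform positivity of $\epsilon_{\alpha g}/\alpha^n$ and careful tracking of how the remainder in \eqref{asympt} propagates through $\log$ and then $\partial\overline\partial$. Nothing here is deep, but it is the single place where both the normalization $a_0\equiv 1$ and the $C^r$ strength of \eqref{asympt} are used simultaneously, so it merits explicit verification rather than a hand wave.
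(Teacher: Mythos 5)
Your proposal follows essentially the same route as the paper: define the coherent states map from an orthonormal basis of $\mathcal H$ (using $1\in\mathcal H$ to guarantee $\mathcal H\neq\{0\}$), invoke \eqref{csm} to write $\tfrac1\alpha F_\alpha^*g_{FS}=g+\tfrac{1}{2\alpha}i\partial\overline\partial\log\epsilon_{\alpha g}$, and use $a_0\equiv1$ together with \eqref{asympt} to conclude that the correction term vanishes as $\alpha\to\infty$. The only difference is that you spell out the $C^r$ estimate on $\log(\epsilon_{\alpha g}/\alpha^n)$ that the paper leaves implicit; this is a welcome addition rather than a divergence in method.
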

\begin{proof}
Denote by $\omega$ the K\"ahler form associated to $g$. Let $F_\alpha:M\to \CP^{d_\alpha}$ be the coherent states map, i.e. $F_\alpha(x)=[\sigma_0(x):\dots:\sigma_j(x):\dots]$, where $\{\sigma_j\}_{j=0,1,\dots}$ is an orthonormal basis of $\mathcal H$ such that $\sigma_0\equiv 1$. Since $\mathcal H\neq\{0\}$, we can define the $\epsilon$-function for $g$ by \eqref{epsilondef}, and we have:
\[
F_\alpha^*\omega_{FS}=\alpha\omega +\frac{i}{2}\partial\overline{\partial}\log\epsilon_{\alpha g}.
\]
By \eqref{asympt}, since $a_0=1$, we have that $\lim_{\alpha\to\infty}\frac{1}{\alpha}F^*_\alpha \;g_{FS}=g$.    
\end{proof}
\begin{remark}\rm
    Observe that the assumption $1\in \mathcal H$ is needed to define the coherent states map. When $M$ is a compact polarized K\"ahler manifold, the existence of $F_\alpha$ is guaranteed by Kodaira's Theorem. In the noncompact case one can always define the map $F_\alpha$ for example when $g$ is regular, i.e. when $\epsilon_{\alpha g}$ is constant. In this case \eqref{epsilondef} implies that for each $x\in M$ there exists a nonvanishing $\sigma_j(x)$.
\end{remark}

We are now in the position of proving Theorem \ref{main1}.
\begin{proof}[Proof of Theorem \ref{main1}]
By Lemma \ref{betac} we can reduce ourselves to prove that $\omega_{\varphi,\beta}$ is not projectively induced for a given value of $\beta$. 
By Lemma \ref{condnec}, a necessary condition for the metric $\omega_{\varphi,\beta}$ on $\C^{n+1}$ to be projectively induced is that $\beta n\ge -2$. Thus, it is enough to set $\beta< -\frac{2}{n}$.

The second part follows by Lemma \ref{1inH}, Theorem \ref{esistenzaepsilon} and Lemma \ref{convergence}.
\end{proof}
Let us now complete the proof of Theorem \ref{main2}.

\begin{proof}[Proof of Theorem \ref{main2}]
    By Lemma \ref{neccondcp1}, $\omega_k$ is not projectively induced for any $k\geq3$. For $k=2$, $\omega_2$ is the Eguchi-Hanson metric on the canonical line bundle $\mathcal{O}(-2)$, that is not projectively induced as shown by A. Loi, M. Zedda, F. Zuddas in \cite{loizeddazuddas2}. For $k=1$, $\omega_1$ is the Burns-Simanca metric that is projectiveley induced as shown by F. Cannas Aghedu and A. Loi in \cite{agheduloi}. The second part follows by Lemma \ref{1inH}, Theorem \ref{esistenzaepsilon} and Lemma \ref{convergence}. Finally, a direct computation (see Appendix \ref{appendix} below), gives:
    $$
a_2=-\frac{48 (k-1) \left(k^2 \tau-2 k \tau-2\right)}{(k \tau+2)^6},
$$
that is identically zero if and only if $k=1$, concluding the proof. 
\end{proof}

\begin{remark}\rm
In \cite{agheduloi}, F. Cannas Aghedu and A. Loi showed that the Simanca metric $g_1$ is projectively induced, and this implies that any of its integer multiples $kg_1$ also are. We note here that these are the only possible multiples that can be K\"ahler immersed in $\CP^\infty$. In fact, by momentum construction, the Simanca metric on $\mathcal{O}(-1)$ arises as a metric on a line bundle over $\CP^1$. In particular, $\CP^1$ is a K\"ahler submanifold of $\mathcal O(-1)$ (obtained setting the fibre coordinate $\xi=0$) and the Fubini--Study form is not integral when multiplied by a noninteger factor.
\end{remark}

\appendix
\section{Computations of $a_2$} \label{appendix}
We compute here the $a_2$ coefficients for the metrics $\omega_{\varphi,\beta}$ in the case where the base manifold $M$ is the complex projective line $\CP^1$, completing the proofs of Theorem \ref{main2}.
From \eqref{potenziale cp1}, the metric $g_k$ reads:
\[
g_k=\begin{pmatrix}
    \frac{k^2 |z|^2 f''(t)+8 k f'(t)+16}{\left(|z|^2+4\right){}^2} &  \frac{k z f''(t)}{\xi \left(|z|^2+4\right)}\\
    \frac{k \overline z f''(t)}{\overline\xi \left(|z|^2+4\right)} & 
    \frac{f''(t)}{|\xi|^2}
\end{pmatrix}.
\]
It follows that
\[
\det(g_k)=\frac{(1+\frac{k}{2}\tau)\varphi(\tau)}{|\xi|^2(1+\frac{1}{4}|z|^2)},
\]
and
\[
g_k^{-1}=\begin{pmatrix}
    \frac{\left(|z|^2+4\right){}^2}{8 \left(k f'(t)+2\right)} & -\frac{k \overline\xi z \left(|z|^2+4\right)}{8 \left(k f'(t)+2\right)}\\
    -\frac{k \overline z \xi \left(|z|^2+4\right)}{8 \left(k f'(t)+2\right)} & \frac{|\xi|^2 \left(k^2 |z|^2 f''(t)+8 k f'(t)+16\right)}{8 f''(t) \left(k f'(t)+2\right)}
\end{pmatrix}.
\]
The norms of the Riemann and Ricci tensors are
\begin{equation*}
\begin{split}
    |R|^2 =\frac{1}{16} &\left(\frac{f^{(4)}(t)^2}{f''(t)^4}+\frac{f^{(3)}(t)^4}{f''(t)^6}-\frac{8 (k^3 f^{(3)}(t)-2 k f'(t)-4)}{(k f'(t)+2)^3}+\frac{8 k^4 f''(t)^2}{(k f'(t)+2)^4}\right.\\
    &\quad\left.-\frac{16 k^2 f''(t)}{(k f'(t)+2)^3}-\frac{2 f^{(3)}(t)^2 f^{(4)}(t)}{f''(t)^5}+\frac{4 k^2 f^{(3)}(t)^2}{f''(t)^2 (k f'(t)+2)^2}\right),
\end{split}
\end{equation*}
and
\begin{equation*}
    \begin{split}
        |\rm{Ric}|^2=\frac{1}{16} &\left(\frac{16}{(k f'(t)+2)^2}+\frac{f^{(3)}(t)^4}{f''(t)^6}+\frac{2 k^4 f''(t)^2}{(k f'(t)+2)^4}-\frac{8 k^2 f''(t)}{(k f'(t)+2)^3}-\frac{2 f^{(3)}(t)^2 f^{(4)}(t)}{f''(t)^5}\right.\\
        &\quad +\frac{4 k^2 f^{(3)}(t)^2}{f''(t)^2 (k f'(t)+2)^2}+\frac{2 k f^{(3)}(t) f^{(4)}(t)}{f''(t)^3 (k f'(t)+2)}-\frac{2 k (k f^{(4)}(t)+4 f^{(3)}(t))}{f''(t) (k f'(t)+2)^2}+\frac{f^{(4)}(t)^2}{f''(t)^4}+\\
        &\quad\left.-\frac{2 k f^{(3)}(t)^3}{(k f'(t)+2)f''(t)^4}\right).
    \end{split}
\end{equation*}
By \eqref{derivatef} with $\varphi(\tau)=\frac{2\tau+\tau^2}{1+\frac{k}{2}\tau}$,  the $a_2$ coefficient for the metrics $\omega_k$ on $\mathcal{O}(-k)$ is given by:
\begin{equation}\label{a2cp1}
a_2=-\frac{48 (k-1) \left(k^2 \tau-2 k \tau-2\right)}{(k \tau+2)^6}.
\end{equation}

\begin{remark}\rm
A similar computation for the Hwang--Singer metric on $\mathds C^{n+1}$ gives:
\begin{equation}
\begin{split}
    a_2(0,1)&=\frac{\beta^2}{4 (1-\beta\tau)^{2(n+2)}} \big(\beta^2 n^4 \tau^2+n (\beta^2 2^n \tau^2+2 \beta (2^n+4) \tau+2^n-4)+2^n (1-\beta^2 \tau^2)+\\
    &\quad+\beta n^3 \tau (\beta (2^n-2) \tau+4)+\beta n^2 \tau (\beta (2^n-3) \tau+2 \left(2^n+2\right))\big).
\end{split}\nonumber
\end{equation} 
\end{remark}


\begin{thebibliography}{99}
\bibitem{ALL} C. Arezzo, C. Li, A. Loi, Gromov-Hausdorff limits and Holomorphic isometries (preprint 2023) \texttt{arXiv:2306.16113} [math.DG].
\bibitem{arezzoloi} C. Arezzo, A. Loi, Moment maps, scalar curvature and quantization of K\"ahler manifolds, {\em Comm. Math. Phys.} {\bf243} (2004) 543–559.
\bibitem{cgr1} M. Cahen, S. Gutt, J. H. Rawnsley,
{\em Quantization of K\"{a}hler manifolds I: Geometric
interpretation of Berezin's quantization}, JGP. 7 (1990), 45-62.
\bibitem{calabi} E. Calabi, Isometric imbedding of complex manifolds. {\em Ann. Math.} {\bf58}, 1--23 (1953).
\bibitem{Cal}  E. Calabi, Métriques k\"ahlériennes et fibrés holomorphes. (French) [K\"ahler metrics and holomorphic vector bundles], Ann. Sci. Ecole Norm. Sup. (4) 12 (1979), no. 2, 269–294
\bibitem{agheduloi} F. Cannas Aghedu, A. Loi, The Simanca metric admits a regular quantization,
{\em Ann. Global Anal. Geom.} {\bf56} (2019), no. 3, 583–596.
\bibitem{englis} M. Engli\v{s}, A Forelli-Rudin construction and asymptotics of weighted Bergman kernels, {\em J. Funct. Anal.} {\bf177}(2), 257--281 (2000).
\bibitem{engliscoeff} M. Engli\v{s}, The asymptotics of a Laplace integral on a K\"ahler manifold. {\em J. Reine Angew. Math.} {\bf528}, 1--39 (2000).
\bibitem{hulin} D. Hulin, K\"ahler–Einstein metrics and projective embeddings. {\em J. Geom. Anal.} {\bf10}(3), 525--528 (2000).
\bibitem{hwangsinger} A. D. Hwang, M. A. Singer, A momentum construction for circle-invariant K\"ahler metrics, {\em Trans. Amer. Math.} Soc.354(2002), no.6, 2285–2325.
\bibitem{ke} G. R. Kempf,
{\em Metric on invertible sheaves on abelian varieties},
Topics in algebraic geometry (Guanajuato)  (1989).

\bibitem{ji} S. Ji,
{\em Inequality for distortion function of invertible
sheaves on Abelian varieties},
Duke Math. J. 58 (1989),  657-667.
\bibitem{loisaliszuddas} A. Loi, F. Salis, F. Zuddas, Two conjectures on Ricci-flat K\"ahler metrics, {\em Math. Z.} (2018) 290:599--613.
\bibitem{loisaliszuddas3} A. Loi, F. Salis, F. Zuddas, Extremal K\"ahler metrics induced by finite or infinite-dimensional complex space forms,
{\em J. Geom. Anal.} {\bf31} (2021), no.8, 7842-7865.
\bibitem{loizeddaMAnn} A. Loi, M. Zedda, K\"ahler--Einstein submanifolds of the infinite dimensional projective space. {\em Math. Ann.} {\bf350}, 145--154 (2011).
\bibitem{loizeddabook} A. Loi, M. Zedda, {\em K\"ahler immersions of K\"ahler manifolds into complex space forms}, Lecture Notes of the Unione Matematica Italiana, vol. 23, Springer, Cham; Unione Matematica Italiana, 2018.
\bibitem{loizeddazuddas} A. Loi, M. Zedda, F. Zuddas, Some remarks on the K\"ahler geometry of the Taub-NUT metrics, {\em Ann. Global Anal. Geom.} {\em41} (4) (2012) 515–533.
\bibitem{loizeddazuddas2} A. Loi, M. Zedda, F. Zuddas, Ricci flat Calabi's metric is not projectively induced, {\em Tohoku Math. J.} (2) {\bf73} (2021), no. 1, 29–37.
\bibitem{lu} Z. Lu, On the lower terms of the asymptotic expansion of Tian-Yau-Zelditch. {\em Am. J. Math.} {\bf122}, 235-273 (2000).
\bibitem{mamarinescu} Ma, X., Marinescu, G.:Holomorphic morse inequalities and Bergman kernels. Progress in Mathematics,
Birkhäuser, Basel (2007)
\bibitem{rawnsley} J. Rawnsley, \emph{Coherent states and K\"ahler manifolds}, Quart. J. Math. Oxford (2), n. 28 (1977), 403--415.
\bibitem{zedda} M. Zedda, Stenzel's Ricci-flat K\"ahler metrics are not projectively induced, {\em Osaka J. Math.} {\bf58} (2021), no. 4, 921–927.
\bibitem{zelditch} S. Zelditch, Szeg\"o kernels and a theorem of Tian, {\em Int. Math. Res. Notices} {\bf6}, 317--331 (1998).
\bibitem{zha} S. Zhang,
{\em Heights and reductions of semi-stable varieties},
Comp. Math. 104 (1996), 77-105.


\end{thebibliography}
\end{document}